\def\Figref#1{Figure~\ref{#1}}
\def\eqref#1{equation~\ref{#1}}
\def\Eqref#1{Equation~\ref{#1}}
\def\Algref#1{Algorithm~\ref{#1}}
\def\1{\bm{1}}
\DeclareMathAlphabet{\mathsfit}{\encodingdefault}{\sfdefault}{m}{sl}
\SetMathAlphabet{\mathsfit}{bold}{\encodingdefault}{\sfdefault}{bx}{n}
\def\gN{{\mathcal{N}}}
\def\gU{{\mathcal{U}}}
\newcommand{\E}{\mathbb{E}}
\newcommand{\R}{\mathbb{R}}
\DeclareMathOperator*{\argmin}{arg\,min}
\newcommand{\norm}[1]{\left\Vert #1\right\Vert}
\newcommand{\normsq}[1]{\left\Vert#1\right\Vert^{2}}
\newcommand{\inner}[2]{\langle #1,\,#2 \rangle}
\newcommand{\abs}[1]{\left \vert#1\right \vert}
\newcommand{\paren}[1]{\left(#1\right)}
\newcommand{\brackets}[1]{\left[#1\right]}
\newcommand{\braces}[1]{\left\{#1\right\}}
\newcommand{\dotp}[1]{\left\langle {#1} \right\rangle}
\newcommand{\transpose}{^\mathsf{\scriptscriptstyle T}}
\newcommand{\indicator}[1]{\mathbbm{1}\braces{#1}}
\newcommand{\var}{\mathrm{VaR}}
\newcommand{\cvar}{\mathrm{CVaR}}
\DeclareMathOperator{\prox}{prox}       
\DeclareMathOperator{\dom}{dom}			
\newtheorem{assumption}{Assumption}
\crefname{assumption}{Assumption}{Assumptions}
\crefname{thm}{Theorem}{Theorems}
\declaretheorem[name=Lemma]{lem}
\crefname{lem}{Lemma}{Lemmas}
\crefname{prop}{Proposition}{Propositions}
\crefname{cor}{Corollary}{Corollaries}
\newcommand{\myref}[1]{\cref{#1}\mynameref{#1}{\csname r@#1\endcsname}}
\newcommand{\Myref}[1]{\Cref{#1}\mynameref{#1}{\csname r@#1\endcsname}}
\def\mynameref#1#2{%
  \begingroup
    \edef\@mytxt{#2}%
    \edef\@mytst{\expandafter\@thirdoffive\@mytxt}%
    \ifx\@mytst\empty\else
    \space(\nameref{#1})\fi
  \endgroup
}
\def\adagrad/{{AdaGrad}}
\def\amsgrad/{{AMSGrad}}
\def\adam/{{Adam}}
\def\rmsprop/{{RMSProp}}
\def\adadelta/{{AdaDelta}}
\def\pytorch/{{PyTorch}}
\def\mushrooms/{{\texttt{mushrooms}}}
\def\abalone/{{\texttt{abalone}}}
\def\usps/{{\texttt{USPS}}}
\def\polyaklojasiewicz/{Polyak-{\L}ojasiewicz (PL)}
\def\SPLplus/{SPL\raisebox{.2em}{$_+$}}
\begin{document}

\title{A Model-Based Method for Minimizing CVaR and Beyond}

\author{\name Si Yi Meng \email sm2833@cornell.edu \\
       \addr Department of Computer Science\\
       Cornell University\\
       Ithaca, NY, USA
	   \AND
       \name Robert M. Gower \email rgower@flatironinstitute.org \\
       \addr Center for Computational Mathmatics \\
       Flatiron Institute\\
       New York, NY, USA
       }

\editor{My editor}

\maketitle
\begin{abstract}
We develop a variant of the stochastic prox-linear method 
for minimizing the Conditional Value-at-Risk (CVaR) objective. 
CVaR is a risk measure focused on minimizing worst-case performance, 
defined as the average of the top quantile of the losses.
In machine learning, such a risk measure is useful to 
train more robust models. Although the stochastic subgradient method (SGM)
is a natural choice for minimizing the CVaR objective, we show that our stochastic 
prox-linear (\SPLplus/) algorithm can better exploit 
the structure of the objective, while still providing a 
convenient closed form update. Our \SPLplus/ method also adapts to the scaling of the loss function,  which allows for easier tuning. 
We then specialize 
a general convergence theorem for \SPLplus/
to our setting, and show that it allows for a wider selection 
of step sizes compared to SGM. 
We support this theoretical finding experimentally. 
\end{abstract}

\section{Introduction}
\label{sec:intro}

The most common approach to fit a model parametrized by $\theta \in \R^d$ to data, is to minimize the \emph{expected} loss over the data distribution, that is 
\begin{align}
    \label{eq:expected-risk-minimization}
    \min_{\theta\in\R^d} R_{\mathrm{ERM}}(\theta) =\E_{z\sim P} [\ell(\theta; z)].
\end{align}
But in many cases, the expected loss may not be the suitable objective to minimize. 
When robustness or safety of the model are concerned, 
the emphasis should rather be on the extreme values of the distribution
rather than the average value.
For instance, 
in distributionally robust optimization, the goal is to optimize the model 
for the worst case distribution around some fixed distribution \citep{duchi2018learning}.
In extreme risk-averse settings, such as when safety is the top priority, it is desirable to minimize 
the maximum loss within a training set \citep{shalev2016minimizing}.
These applications can all be formulated as minimizing the expectation of the 
losses that are \emph{above} some cutoff value,
\begin{align}
    \label{eq:upper-tail-risk-minimization}
    \min_{\theta\in\R^d} R_{\mathrm{CVaR}}(\theta) = \E_{z\sim P} \brackets{ \ell(\theta; z) \mid \ell(\theta;z) \geq \alpha_\beta(\theta) },
\end{align}
where $\alpha_\beta (\theta)$ is the upper $\beta$--quantile of the losses. 
For example, for $\beta =0.9$,
the problem in~\labelcref{eq:upper-tail-risk-minimization} is to minimize the expectation 
of the worst $10\%$ of the losses.

In this work, we propose a variant of the stochastic prox-linear (SPL) method pioneered by~\citet{burke1995gauss,lewis2016proximal,duchi2018stochastic} for solving~\labelcref{eq:upper-tail-risk-minimization}.  
The possibility of applying SPL to CVaR minimization was mentioned in \citet{davis2019stochastic}, 
but not explored. 
We introduce a variant of SPL called \SPLplus/, that adapts to the scaling of the loss function, which in turn allows for a default parameter setting.
We first derive a closed-form update 
for \SPLplus/, and show why it is particularly well suited for minimizing CVaR. 
We give its convergence rates for convex and Lipschitz losses by adapting existing results from \citet{davis2019stochastic}. 
Through several experiments comparing the stochastic prox-linear method to stochastic subgradient we show that SPL and \SPLplus/ are more robust to the choice of step size. We conclude
with a discussion on several future applications for minimizing CVaR  in machine learning.

\subsection{Background}
The CVaR objective was first introduced in finance as an alternative measure of risk, also known
as the expected shortfall \citep{artzner1999coherent,embrechts1999extreme}.
Many applications in finance can be formulated as CVaR minimization problems,
such as portfolio optimization \citep{krokhmal2002portfolio,mansini2007conditional}, 
insurance \citep{embrechts2013modelling} and credit risk management~\citep{andersson2001credit}.
The seminal work of \citet{rockafellar2000optimization} 
proposed a variational formulation of the CVaR objective that is 
amenable to standard optimization methods.
This formulation has since inspired considerable research in applications 
spanning machine learning and adjacent fields, such as $\nu$-SVM \citep{takeda2008nu,gotoh2016cvar},
robust decision making and MDPs \citep{chow2015risk,chow2014algorithms,chow2017risk,cardoso2019risk,sani2012risk},
influence maximization and submodular optimization \citep{maehara2015risk,ohsaka2017portfolio,wilder2018risk},
fairness \citep{williamson2019fairness}, and federated learning \citep{laguel2021superquantile}.

Though it finds many applications, the CVaR objective is typically difficult to minimize.
It is nonsmooth even when
the individual losses $\ell(\cdot; z)$ are continuously differentiable. Indeed,
if $P$ does not admit a density~\textemdash~which is the case for all empirical distributions over
training data~\textemdash~the variational objective is not everywhere differentiable.
To address this,~\citet{laguel2020first} developed subdifferential calculus for a number of
equivalent CVaR formulations and proposed minimizing a smoothed version of the dual objective.
On the other hand, several works \citep{soma2020statistical,holland2021learning} apply the
stochastic subgradient method directly to the variational formulation proposed by~\citet{rockafellar2000optimization}, which is well-defined regardless
of the distribution $P$. However, as we elaborate in~\cref{sec:sgm}, this approach is oblivious to the special
structure of the variational form of the CVaR objective.
\section{Problem setup}
\label{sec:setup}
Let $\ell(\theta;z)$ be the loss associated with the model parameters
$\theta\in \R^d$ and a measurable random variable $z(\omega)$ on some background probability space $(\Omega, \mathcal{F}, \mathbb{P})$. \\[.5pc]
\begin{minipage}{0.57\textwidth}
When $z$ follows a distribution $P$ with density $p(z)$,
the cumulative distribution function on the loss for a fixed $\theta$ is given by  $\mathbb{P}[\ell(\theta;z)\leq\alpha] = \int_{\text{\raisebox{.2em}{$\scriptstyle \ell(\theta;z)\leq \alpha $}}} p(z) \,dz$,
which we assume is everywhere continuous with respect to $\alpha$. 
Let $\beta$ be a confidence level, for instance $\beta = 0.9 $. 
The Value-at-Risk (VaR) of the model is the lowest $\alpha$ such that with probability $\beta$, 
the loss will not exceed $\alpha$.  Formally, 
\begin{align}
	\label{eq:var-defn}
	\var_\beta(\theta) \coloneqq \min\braces{\alpha\in\R\,:\,\mathbb{P}[\ell(\theta;z)\leq\alpha] \geq \beta}.
\end{align}
The Conditional Value-at-Risk (CVaR) is the expectation of the upper tail starting at $\var_\beta$, illustrated in~\Figref{fig:var-cvar}:
\begin{align}
	\label{eq:cvar-defn}
	\cvar_\beta(\theta) \coloneqq 
	\E_{z\sim P}[\ell(\theta;z) \mid \ell(\theta;z) \geq \var_\beta(\theta)].
\end{align}
\end{minipage}
\hfill
\begin{minipage}{0.43\textwidth}
	\begin{center}
	  \includegraphics[scale=0.58]{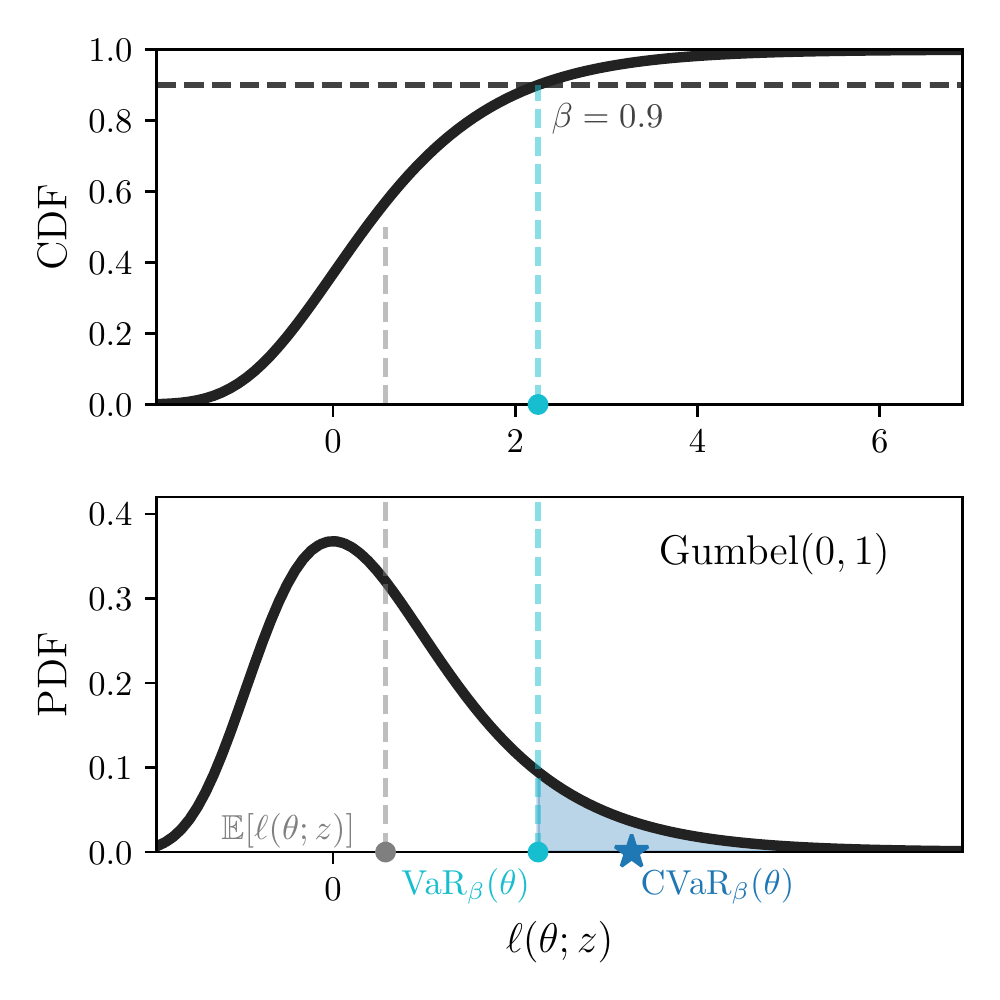}
	  	\end{center}
 	\vspace*{-1em}
    \captionof{figure}{Expectation, VaR, and CVaR.}
	\label{fig:var-cvar}
\end{minipage} 

Clearly, the CVaR upper bounds the VaR for the same $\beta$. 
Our goal is to minimize CVaR$_\beta$ over $\theta\in \R^d$,
but directly minimizing \labelcref{eq:cvar-defn} is not straightforward.
Fortunately, \citet{rockafellar2000optimization} introduced a variational formulation where the solution to
\begin{align}
	\label{eq:variational-cvar}
\theta^*,\alpha^* &\in	\argmin_{\theta\in\R^d, \alpha\in \R}  F_\beta(\theta,\alpha) \quad \mbox{where,} \\
	F_\beta(\theta,\alpha)&:= \alpha + \frac{1}{1-\beta} \E_{z\sim P} \brackets{ \max\braces{ \ell(\theta;z) - \alpha,\, 0 }} \nonumber
\end{align}
is such that $\theta^*$ is the solution to~\labelcref{eq:cvar-defn}, and 
we obtain $\alpha^* = \var_\beta(\theta)$ as a byproduct.

\section{The Stochastic Subgradient Method}
\label{sec:sgm}
A natural choice for minimizing \labelcref{eq:variational-cvar} is the stochastic subgradient method (SGM). Letting $\partial f$ denote the convex subdifferential
of $f$, at each step $t$ we sample $z\sim P$ uniformly and compute a subgradient $g_t$ from the subdifferential
\begin{align}
	\label{eq:cvar-sampled-subgrad}
	\partial F_\beta(\theta_t,\alpha_t;  z)  &=
   \begin{pmatrix}
        \mathbf{0} \\ 1
    \end{pmatrix}  
      + 
    \frac{u_t}{1-\beta}  \begin{pmatrix}
        \partial \ell(\theta_t;z) \\ -1
    \end{pmatrix} 
\end{align}
where $u_t  = \left. \partial \max\{u, 0\}\right|_{\text{\raisebox{.2em}{$u = \ell(\theta_t; z) - \alpha_t$}}}$. 
Given some step size sequence  $\braces{\lambda_t}>0$, and denoting $x = (\theta,\alpha)\transpose$, SGM then takes the step 
\begin{equation}\label{eq:SGM}
 x_{t+1} = x_t - \lambda_t g_t, \quad  \mbox{where } g_t\in\partial F_\beta(\theta_t,\alpha_t;z).    
\end{equation}
Substituting in the subgradient $g_t$ given in~\labelcref{eq:cvar-sampled-subgrad} into~\labelcref{eq:SGM} gives
\begin{align}\label{eq:SGM2}
 \theta_{t+1} &= \theta_t - \frac{\lambda_t}{1-\beta} u_t \partial \ell(\theta_t;z),  \\
 \alpha_{t+1} & = \alpha_t-\lambda_t + \frac{\lambda_t}{1-\beta} u_t, 
\end{align} 
For reference, the complete SGM algorithm is given in \Algref{alg:sgd-cvar}.
SGM is very sensitive to the step size choice and 
may diverge if not carefully tuned. 
\begin{algorithm}[h]
    \begin{algorithmic}[1]
    \State \textbf{initialize:}  $\theta_0\in \R^{d}$, $\alpha_0 \in \R,$ \textbf{hyperparameter:} $\lambda>0$ 
    \For {$t = 0, 1, 2, \dots, T$}
        \State Sample data point $z\sim P$, compute $\ell(\theta_t;z)$ and $v_t\in \partial \ell(\theta_t;z)$
		\State $\lambda_t\gets \lambda / \sqrt{t+1}$
        \If{ $\alpha_t \geq \ell(\theta_t;z) $ } \Comment{$\alpha_t$ too big}
            \State $\theta_{t+1} \gets \theta_t$
            \State $\alpha_{t+1} \gets \alpha_t - \lambda_t$
        \Else \Comment{$\alpha_t$ too small}
            \State $\theta_{t+1} \gets \theta_t - \frac{\lambda_t}{1-\beta} v_t $
            \State $\alpha_{t+1} \gets \alpha_t + \frac{\lambda_t}{1-\beta} \beta$
        \EndIf
    \EndFor
    \State \Return $\bar x_T = \frac{1}{T+1} \sum_{t=1}^{T+1} (\theta_t, \alpha_t)\transpose$
    \end{algorithmic}
    \caption{SGM: Stochastic subgradient method for CVaR minimization }
    \label{alg:sgd-cvar}
\end{algorithm}
This issue can be explained from a 
modeling perspective \citep{davis2019stochastic}.
Indeed, SGM can be written as a model-based method where at each iteration $t$,
it uses the following linearization of the sampled $F_\beta(x;z)$ at the current point $x_t$:
\begin{align}
	\label{eq:sgm-model}
	m_{t}^{\mathrm{SGM}}(x;z) := F_\beta(x_t;z) + \inner{g_t}{x - x_t}.
\end{align}
This provides an approximate, stochastic model of the objective $F_\beta(x)$. 
The SGM update is then a proximal step on this model, that is
\begin{align}
	\label{eq:stoch-model-based-update}
	x_{t+1} = \argmin_{x\in\R^{d+1}} \,  m_{t}(x;z) + \frac{1}{2\lambda_t}\normsq{x - x_t}
\end{align}
using $m_t = m_{t}^{\mathrm{SGM}}$. The issue with $m_t=m_{t}^{\mathrm{SGM}}(x;z)$ is that it uses a linearization to approximate the 
$\max\{\cdot,\,0\}$ function. 
This linearization can take negative values, which is a poor approximation of the non-negative $\max\{\cdot,\, 0\}$ operation.
The main insight of the SPL method is to leverage the structure of
$F_\beta(x)$ as a truncated function.
This structure allows for a more accurate model that still has an easily computable proximal operator. 

\section{The SPL method for CVaR minimization}
\label{sec:algorithm}

\subsection{A tighter model}
Here we introduce an alternative model for our objective that only linearizes \emph{inside} the $\max\{\cdot,0\}$,
which is a strictly more accurate model when the objective is convex~\citep{asi2019importance}.
In particular, for some $v_t\in\partial\ell(\theta_t;z)$ and $\ell_t := \ell(\theta_t;z)$, we use
\begin{align}
	\label{eq:spl-model}
	m_t^{\mathrm{SPL}}(\theta,\alpha;z) & = \alpha + \frac{1}{1-\beta} \max \braces{ \ell_t + \inner{ v_t }{ \theta - \theta_t } - \alpha, \, 0}
\end{align}

\begin{figure}
	\vspace*{-1em}
	\begin{center}
		\includegraphics[scale=0.6]{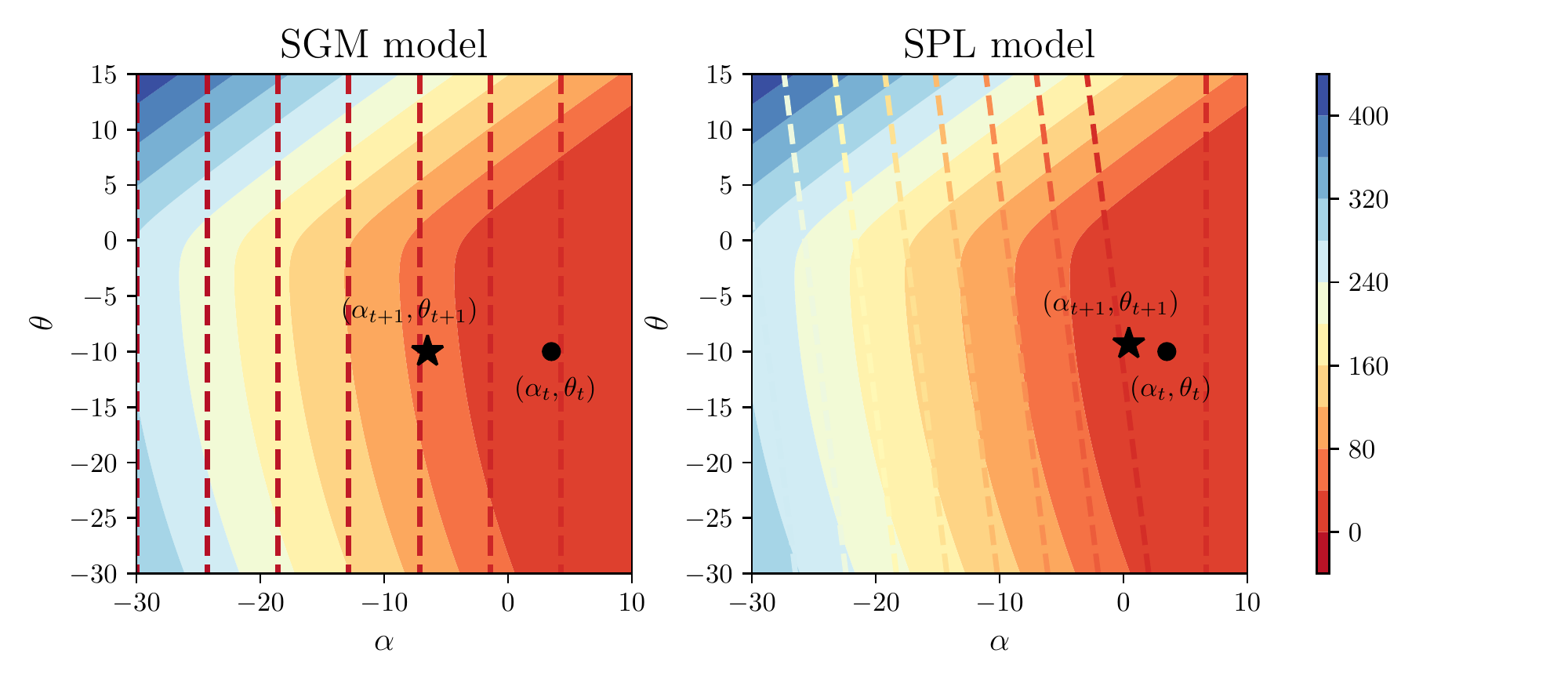}
	\end{center}
	\vspace*{-1em}
  	\caption{Comparison of SGM and SPL models on the CVaR objective with a single $\ell(\theta) = \log(1+\exp(\theta)) + \frac{0.01}{2}\theta^2$. Filled contours are the level sets of the objective, while the dashed contour lines are the level sets of the respective model $m_t$ constructed at $(\theta_t,\alpha_t)$.
	With the same step size, the SGM model results in an update that increases the objective, whereas the SPL model does not.
	Note that because the subgradient of the objective is $0$ in $\theta$, the SGM model is constant in $\theta$.
	}
	\label{fig:cvar-model-compare}
\end{figure}%
The algorithm resulting from \labelcref{eq:stoch-model-based-update} using $m_t = m_t^{\mathrm{SPL}}$
is known as the stochastic prox-linear (SPL) method \citep{duchi2018stochastic}.
\Figref{fig:cvar-model-compare} illustrates that~\labelcref{eq:spl-model} better approximates the level sets of the loss function as compared to~\labelcref{eq:sgm-model}.

\subsection{Separate regularization parameters}
Now that we have determined a tighter model~(\ref{eq:spl-model}), it remains now to select a default step size sequence $\lambda_t$ for the proximal step~\labelcref{eq:stoch-model-based-update}.   But, as we will argue next, having the same default step size sequence for both $\alpha$ and $\theta$ could lead to inconsistencies
due to the dependency on the \emph{scale} of the loss function. 

To explain this dependency, let units$(\ell)$ denote the \emph{units} of our loss function $\ell(\theta_t;z).$ For instance, our loss could be a cost measured in dollars. 
Since $\alpha$ approximates a quantile of the losses, 
it must also have the same units as the loss. Consequently, our model in \labelcref{eq:spl-model} also has the same units as the loss function. 
A clash of units appears when we consider the regularization term in~\labelcref{eq:stoch-model-based-update}, that is the term
\begin{align*}
	\frac{1}{2\lambda_t} \norm{x-x_t}^2 =\frac{1}{2\lambda_t} \left(\norm{\theta-\theta_t}^2 + (\alpha-\alpha_t)^2\right). 
\end{align*}
This regularization term must also have the same units as the loss so that the entire objective in~\labelcref{eq:stoch-model-based-update} has consistent units. But since units$(\alpha) = \,$units($\ell$), 
the term $\frac{1}{2\lambda_t}  (\alpha-\alpha_t)^2$ 
can only have the same units as the loss if units$(\lambda_t) = \,$units($\ell$). 
In direct contradiction, the term $\frac{1}{2\lambda_t} \norm{\theta-\theta_t}^2 $ can only have the same units as the loss if units$(\lambda_t) = 1/$units($\ell$),
since $\theta$ parametrizes the objective and thus does not carry the units of the loss.
There is no choice of $\lambda_t$ which would result in the objective of~\labelcref{eq:stoch-model-based-update} having consistent units; 
consequently, there is no default, scale-invariant $\lambda_t$ that would work across different loss functions.

One simple way to fix this clash of units, is to disentangle $\lambda_t$ into two regularization parameters $\lambda_{\theta,t}, \lambda_{\alpha,t}>0$ and update the iterates according to
\begin{align}
	\label{eq:stoch-model-based-update-2}
	\theta_{t+1},\alpha_{t+1} = \argmin_{\theta\in\R^{d}, \alpha \in\R} \,  &m_t^{\mathrm{SPL}}(\theta,\alpha;z) + \frac{1}{2\lambda_{\theta,t}}\normsq{\theta - \theta_t}   + \frac{1}{2\lambda_{\alpha,t}}(\alpha_t - \alpha)^2.
\end{align}
Now we can make the units match across~\labelcref{eq:stoch-model-based-update-2} by choosing 
\begin{equation}\label{eq:units-main}
     \mbox{units}(\lambda_{\alpha,t}) = \mbox{units}(\ell) \quad \mbox{and}\quad \mbox{units}(\lambda_{\theta,t}) = \frac{1}{\mbox{units}(\ell)}.
\end{equation}
As suggested by our theory in \labelcref{eq:best-bounds-lambdas},
if we had access to the average Lipschitz constant $L$ of the the individual losses $\ell$,
then we should choose 
\begin{equation}\label{eq:units-main-Lipschitz}
    \lambda_{\alpha,t} = \frac{\lambda |\alpha_t-\alpha^*|}{\sqrt{t}} \quad \mbox{and}\quad \lambda_{\theta,t} = \frac{\lambda\norm{\theta_t-\theta^*}}{L\sqrt{t}},
\end{equation}
where $\lambda >0$ is a numerical constant. 
Although this gives us consistency in the units,
estimating $L$ can be difficult in practice. 
Thus, instead we approximate the scaling 
by using the initial loss $\ell_0 := \E_{z}[\ell(\theta_0;z)]$ and choose
\begin{equation}\label{eq:units-main-loss}
    \lambda_{\alpha,t} = \frac{\lambda \ell_0}{\sqrt{t}} \quad \mbox{and}\quad \lambda_{\theta,t} = \frac{\lambda}{\ell_0 \sqrt{t}},
\end{equation}
while setting $\lambda$ using a grid search.
We will use~\labelcref{eq:units-main-loss} as our default setting for $\lambda_{\theta,t}$ and $\lambda_{\alpha,t}$. 
Importantly, although we have separate regularization terms,
there is still only one hyperparameter $\lambda$ to be set.

\subsection{Closed form update}
\begin{restatable}[Closed form updates of \SPLplus/]{lem}{closedupdate}
\label{lem:closed-form-update}
The closed form solution to~\labelcref{eq:stoch-model-based-update-2} is given by the updates
\begin{align}  
    \theta_{t+1} & =      \theta_t -\lambda_{\theta,t} \min\braces{ \frac{1}{1-\beta},\, \gamma_t }   \nabla\ell(\theta_t;z), \label{eq:theta-update-SPL-2reg}\\
    \alpha_{t+1} & = \alpha_{t} -\lambda_{\alpha,t}+ \lambda_{\alpha,t} \min\braces{ \frac{1}{1-\beta},\, \gamma_t}, \label{eq:alpha-update-SPL-2reg} \\
	\text{where }\; \gamma_t & = \frac{ \max\braces{\ell(\theta_t;z) - \alpha_t +\lambda_{\alpha,t}, \,0} }{ \lambda_{\theta,t}\normsq{ \nabla\ell(\theta_t;z)} +\lambda_{\alpha,t}}. \label{eq:gamma-update-SPL-2reg}
\end{align}
\end{restatable}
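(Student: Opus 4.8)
The plan is to exploit the variational identity $\max\{a,0\}=\sup_{s\in[0,1]} s\,a$ to turn the nonsmooth minimization in~\labelcref{eq:stoch-model-based-update-2} into a smooth convex--concave saddle-point problem, solve the inner minimization in closed form, and then maximize the resulting one-dimensional concave quadratic over $s\in[0,1]$.

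First I would observe that the objective in~\labelcref{eq:stoch-model-based-update-2} is the sum of the convex model $m_t^{\mathrm{SPL}}(\cdot;z)$ and a positive-definite quadratic in $(\theta,\alpha)$, hence strongly convex, closed, and coercive; the minimizer therefore exists and is unique, so it suffices to exhibit a point meeting the first-order optimality conditions. Writing $\ell_t=\ell(\theta_t;z)$ and $v_t=\nabla\ell(\theta_t;z)$, I substitute $\frac{1}{1-\beta}\max\braces{\ell_t+\inner{v_t}{\theta-\theta_t}-\alpha,\,0}=\sup_{s\in[0,1]}\frac{s}{1-\beta}\paren{\ell_t+\inner{v_t}{\theta-\theta_t}-\alpha}$, so the problem becomes $\min_{\theta,\alpha}\sup_{s\in[0,1]}\Phi(\theta,\alpha,s)$ with $\Phi$ jointly convex in $(\theta,\alpha)$ and affine (hence concave) in $s$. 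Since $[0,1]$ is compact, Sion's minimax theorem justifies swapping the min and the sup.

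For fixed $s\in[0,1]$ the inner minimization separates. Minimizing the $\theta$-terms $\frac{s}{1-\beta}\inner{v_t}{\theta-\theta_t}+\frac{1}{2\lambda_{\theta,t}}\normsq{\theta-\theta_t}$ gives $\theta(s)=\theta_t-\frac{s\lambda_{\theta,t}}{1-\beta}v_t$ with optimal value $-\frac{s^2\lambda_{\theta,t}}{2(1-\beta)^2}\normsq{v_t}$; minimizing the $\alpha$-terms $\alpha\paren{1-\frac{s}{1-\beta}}+\frac{1}{2\lambda_{\alpha,t}}(\alpha-\alpha_t)^2$ gives $\alpha(s)=\alpha_t-\lambda_{\alpha,t}+\frac{s\lambda_{\alpha,t}}{1-\beta}$. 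Substituting back and collecting terms, the dual objective is the concave quadratic $g(s)=\text{const}+\frac{s}{1-\beta}\paren{\ell_t-\alpha_t+\lambda_{\alpha,t}}-\frac{s^2}{2(1-\beta)^2}\paren{\lambda_{\theta,t}\normsq{v_t}+\lambda_{\alpha,t}}$, whose unconstrained maximizer is $(1-\beta)\paren{\ell_t-\alpha_t+\lambda_{\alpha,t}}/\paren{\lambda_{\theta,t}\normsq{v_t}+\lambda_{\alpha,t}}$. Projecting onto $[0,1]$: when the numerator is negative the maximizer clips to $0$, which is exactly what replacing $\ell_t-\alpha_t+\lambda_{\alpha,t}$ by its positive part achieves, so $s^\star=\min\braces{1,\,(1-\beta)\gamma_t}=(1-\beta)\min\braces{\frac{1}{1-\beta},\gamma_t}$ with $\gamma_t$ as in~\labelcref{eq:gamma-update-SPL-2reg}. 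Plugging $s^\star$ into $\theta(s)$ and $\alpha(s)$ yields~\labelcref{eq:theta-update-SPL-2reg} and~\labelcref{eq:alpha-update-SPL-2reg}.

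I expect the only real obstacle to be the bookkeeping in this last step: checking that the $[0,1]$-clipping of $s^\star$ reproduces precisely the nested $\min\braces{\frac{1}{1-\beta},\gamma_t}$ together with the $\max\braces{\cdot,0}$ in the numerator of $\gamma_t$, and confirming that the minimax swap is legitimate (it is, by Sion's theorem, given compactness of $[0,1]$ and the convex--concave structure). A minimax-free alternative is to split into the two cases according to whether the argument of $\max\braces{\cdot,0}$ is nonpositive or positive at the optimum: in the first case $\theta$ is unchanged and $\alpha$ solves a one-dimensional quadratic, while in the second $m_t^{\mathrm{SPL}}$ is differentiable and one sets its gradient to zero; the two regimes glue along $\gamma_t=\frac{1}{1-\beta}$. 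I would nonetheless present the dual derivation as the main argument, since it produces the $\min$/$\max$ structure of~\labelcref{eq:theta-update-SPL-2reg}--\labelcref{eq:gamma-update-SPL-2reg} directly.
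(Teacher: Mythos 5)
Your derivation is correct: the inner minimizations, the dual quadratic $g(s)$, and the clipping of its maximizer to $[0,1]$ all check out and reproduce \labelcref{eq:theta-update-SPL-2reg}--\labelcref{eq:gamma-update-SPL-2reg} exactly. The route, however, is genuinely different from the paper's. The paper first performs a change of variables $\hat\alpha = \sqrt{\lambda_\theta/\lambda_\alpha}\,\alpha$, $\hat\alpha_t = \sqrt{\lambda_\theta/\lambda_\alpha}\,\alpha_t - \sqrt{\lambda_\theta\lambda_\alpha}$ that simultaneously absorbs the leading $\alpha$ into the quadratic and makes the regularizer isotropic, so that \labelcref{eq:stoch-model-based-update-2} becomes a single proximal step on a truncated affine model $\max\{c+\inner{a}{x-x_t},0\}$; it then invokes a prox-calculus lemma (composition with an affine map plus the prox of the support function of $[0,1]$) to get the closed form. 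You instead dualize $\max\{u,0\}=\sup_{s\in[0,1]}su$ directly, swap min and sup by Sion, and exploit the fact that the inner problem separates over $\theta$ and $\alpha$ — which is what lets you handle the two distinct regularization parameters without any rescaling. Both arguments ultimately rest on the same conjugate representation of $\max\{\cdot,0\}$ (the paper's projection onto $[0,1]$ is your clipped $s^\star$), but yours trades the variable substitution for a one-dimensional concave maximization, which is arguably more transparent and generalizes immediately to anisotropic or even matrix-weighted proximal terms. The one step you should make explicit is the passage from the dual optimizer $s^\star$ back to the primal point: equality of optimal values alone does not hand you the minimizer, but since $\Phi(\cdot,s)$ is strongly convex, $x(s)=\argmin_x\Phi(x,s)$ is single-valued, $g$ is differentiable by Danskin, and the first-order condition for $s^\star$ over $[0,1]$ shows $(x(s^\star),s^\star)$ is a saddle point, so $x(s^\star)$ is indeed the unique solution of \labelcref{eq:stoch-model-based-update-2}. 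With that sentence added, the proof is complete.
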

We first give a sketch of how the updates are derived.
\begin{proof}
For one step update, we can drop the subscript $t$ without loss of generality.
The key step is to rewrite \labelcref{eq:stoch-model-based-update-2}
in the form of a proximal step on a truncated model, namely,
\begin{align*}
	x_{t+1} = \argmin_{x\in\R^{d+1}} \, \max\braces{ c + \inner{a}{x-x_t},\,0 } + \frac{1}{2\lambda} \normsq{x-x_t}
\end{align*}
where $x = (\theta,\hat\alpha)^\top$ is the concatenation of $\theta$ and a scaled version of $\alpha$.
The solution to this has a nice form given in \cref{lem:truncated-model} in the appendix,
\begin{align*}
	x^{t+1} = x_t -  \underbrace{ \min\braces{\lambda,\, \frac{ \max\braces{c,\,0} }{\normsq{a}}}}_{\smash{\eqqcolon \eta}} a.
\end{align*} 
One can show that by redefining variables as 
\begin{align*}	
	\hat\alpha = \sqrt{\frac{\lambda_\theta}{\lambda_\alpha}}\alpha  \qquad \text{and} \qquad \hat \alpha_t = \sqrt{\frac{\lambda_\theta}{\lambda_\alpha}} \alpha_t - \sqrt{\lambda_\theta\lambda_\alpha},
\end{align*}
we can absorb the leading $\alpha$ in the model~(\ref{eq:spl-model}) into its regularization term, giving us 
\begin{align*}
	\alpha + \frac{1}{2\lambda_\alpha}(\alpha-\alpha_t)^2 = \frac{1}{2\lambda_\theta} (\hat\alpha - \hat\alpha_t)^2 + \text{Const. }.
\end{align*} 
After some simple manipulation on the linearization term of \labelcref{eq:stoch-model-based-update-2}, we get that 
\begin{align*}
	c = \frac{1}{1-\beta} \paren{ \ell(\theta_t;z) - \sqrt{\frac{\lambda_\alpha}{\lambda_\theta}}\hat\alpha_t}, \quad a = \frac{1}{1-\beta}\begin{pmatrix}
		\nabla\ell(\theta_t;z) \\
		-\sqrt{\frac{\lambda_\alpha}{\lambda_\theta}}.
	  \end{pmatrix}.
\end{align*} 
Plugging $a,c$ into the update of the truncated model above, 
\begin{align*}
	\eta = \min\braces{ \lambda_\theta,\, \frac{ \max\braces{\ell(\theta_t;z) - \sqrt{\frac{\lambda_\alpha}{\lambda_\theta}}\hat\alpha_t, \,0} }{\frac{1}{(1-\beta)} (\normsq{ \nabla\ell(\theta_t;z)} +\frac{\lambda_{\alpha}}{\lambda_{\theta}} ) } } .
\end{align*}
Substituting out $\hat\alpha_t$ for $\alpha_t$
and multiplying by $a$ gives us the desired $\theta_{t+1}$ and $\alpha_{t+1}$.
\end{proof}
The detailed proof can be found in \cref{sec:app-alg}, with a breakdown of the updates in~\Algref{alg:spl-cvar-two-reg}. 
Alternative to our technique, one can also derive these updates by enumerating the KKT conditions 
after formulating \labelcref{eq:stoch-model-based-update-2} as a constrained minimization problem with an additional slack variable.

Examining the update in Lemma~\ref{lem:closed-form-update}, we can see that the cost of computing each iteration of \SPLplus/ is of the same order as computing an iteration of SGM. Finally, if we set the regularization parameters according to the guide in~\labelcref{eq:units-main}, we can see by examining the units of \SPLplus/ that $\gamma_t$ in~\labelcref{eq:gamma-update-SPL-2reg} is \emph{unitless}. 
As a result, the units are consistent across the updates of both $\theta$ in~\labelcref{eq:theta-update-SPL-2reg} and $\alpha$ in~\labelcref{eq:alpha-update-SPL-2reg}.
Next, we discuss two applications of \SPLplus/ which correspond to two extreme settings for the CVaR objective.

\begin{algorithm}[h]
    \begin{algorithmic}[1]
    \State \textbf{initialize:}  $\theta_0\in \R^{d}$, $\alpha_0 \in \R,$ \textbf{hyperparameter:} $\lambda>0$ 
    \For {$t = 0, 1, 2, \dots, T$}
        \State Sample data point $z\sim P$
        \State Compute $\ell(\theta_t;z)$ and $v_t\in \partial \ell(\theta_t;z)$
		\State $\lambda_{\theta,t}\gets \lambda  / (\ell_0 \sqrt{t+1})$
		\State $\lambda_{\alpha,t}\gets \lambda  \ell_0 / \sqrt{t+1}$ 
        \If{ $\alpha_t > \ell(\theta_t;z) + \lambda_{\alpha,t} $ } \Comment{$\alpha_t $ too big}
            \State $\theta_{t+1} \gets \theta_t$
            \State $\alpha_{t+1} \gets \alpha_t - \lambda_{\alpha,t} $
        \ElsIf{ $\alpha_t < \ell(\theta_t;z) -  \frac{\lambda_{\theta,t}}{1-\beta}\normsq{v_t} - \frac{\lambda_{\alpha,t} \beta}{1-\beta}$ }  \Comment{$\alpha_t$ too small}
            \State $\theta_{t+1} \gets \theta_t - \frac{\lambda_{\theta,t}}{1-\beta} v_t $
            \State $\alpha_{t+1} \gets \alpha_t +  \frac{\lambda_{\alpha,t}}{1-\beta}\beta$
        \Else \Comment{$\alpha_t$ in middle range}
\State $\nu\gets \frac{\ell(\theta_t;z) + \lambda_{\alpha,t} - \alpha_t}{\lambda_{\theta,t} \normsq{v_t} + \lambda_{\alpha,t} }$
\State $\theta_{t+1} \gets \theta_t - \lambda_{\theta,t}\nu \nabla \ell(\theta_t; z) $
\State $\alpha_{t+1} \gets \alpha_t - \lambda_{\alpha,t} + \lambda_{\alpha,t}\nu $
        \EndIf
    \EndFor
	\State \Return $\bar x_T = \frac{1}{T+1} \sum_{t=1}^{T+1} (\theta_t, \alpha_t)\transpose$
    \end{algorithmic}
    \caption{\SPLplus/: Stochastic prox-linear method for CVaR minimization with separate regularization }
    \label{alg:spl-cvar-two-reg}
\end{algorithm}

\subsection{Solving the max loss problem}
The \SPLplus/ method can been seen as an extension of recent class of adaptive methods~\citep{slackpolyak} for minimizing the max loss, as we detail next.
If $P$ is the empirical distribution over $n$ training examples, 
setting $\beta= \nicefrac{n-1}{n}$ turns the CVaR minimization problem into the 
max loss minimization problem 
\begin{align}
	\label{eq:max-loss-minimization-problem}
	\min_{\theta\in\R^d} \; f(\theta) = \max_{i=1,\dots,n} \ell(\theta;z_i).
\end{align}
Indeed, if $\beta= \nicefrac{n-1}{n}$ then 
the Value-at-Risk \labelcref{eq:var-defn} would have to be the max loss, that is, 
$\alpha =\max_{i=1,\dots,n} \ell(\theta;z_i)$. 
Plugging this into~(\ref{eq:variational-cvar}) we have that the second term in $F_\beta(\theta,\alpha)$ is zero, 
leaving only $F_\beta(\theta,\alpha) =\alpha = \max_{i=1,\dots,n} \ell(\theta;z_i).$

The max loss problem is an interesting problem in its own right~\citep{shalev2016minimizing}. 
Recently \citet{slackpolyak} proposed the \emph{Polyak with slack} methods for solving~\labelcref{eq:max-loss-minimization-problem}. Our \SPLplus/ improves upon the Polyak with slack methods in two ways: 
first, \SPLplus/ can be applied to minimizing CVaR for any $\beta,$ and not just the max loss problem; 
second, \SPLplus/ can enjoy a default parameter setting due to the two regularization parameters and the consideration around units in~\labelcref{eq:units-main}.

Finally we show that in this setting \SPLplus/ can also been seen as a stochastic algorithm that minimizes 
the Lagrangian of a slack formulation of \cref{eq:max-loss-minimization-problem},
where the Lagrange multiplier is equal to $\nicefrac{1}{1-\beta}$. We establish this equivalence in 
\cref{sec:relationship-to-max-loss-minimization}.

\subsection{Solving ERM}
When $P$ is the empirical distribution over $n$ training examples, and if
 $\beta =  \frac{1}{n}$,  then minimizing the CVaR objective in \labelcref{eq:variational-cvar}
is equivalent to minimizing the expected risk.  This is because $\alpha = \min_{i=1,\ldots, n} \ell(\theta,z_i)$ due to~(\ref{eq:var-defn}), and consequently from~(\ref{eq:cvar-defn}) we have that 
\begin{align*}
     \mbox{CVaR}_\beta(\theta) &= 
\E_{z\sim P}[\ell(\theta;z) \mid \ell(\theta;z) \geq \min_{i=1,\ldots, n} \ell(\theta,z_i)]   \\
&=\E_{z\sim P}[\ell(\theta;z)]. 
\end{align*}
Thus minimizing~(\ref{eq:variational-cvar}) is equivalent to minimizing the expected risk. 
As a consequence,  \SPLplus/  can also be used as an adaptive method for minimizing the expected risk. 

\section{Convergence theory}
We instantiate the convergence analyses from \citet{davis2019stochastic} 
in the case of CVaR minimization, and compare the rates for SGM and \SPLplus/ 
for losses satisfying the following Assumption.

\begin{assumption}[Convex, subdifferentiable, and Lipschitz]
	\label{asmpt:convex-diff-lipschitz}
	There exist square integrable random variables $M:\Omega\to\R$ such that  
	for a.e. $z\in\Omega$ and all $\theta\in\R^d$, the sample losses $\ell(\theta;z)$ are convex, 
	subdifferentiable\footnote{Historically, the prox-linear method was proposed for composite optimization problems where the inner function is $C^1$~\cite{BF95}. Here we slightly abuse the terminology and allow for general subdifferentiable losses $\ell(\cdot;z)$.}, and $M(z)$-Lipschitz.
\end{assumption}
\begin{restatable}[Convergence rates of SGM and \SPLplus/]{theorem}{convergenceconvex}
	\label{thm:general-convex-rate}
	Suppose Assumption~\ref{asmpt:convex-diff-lipschitz} holds. Let $x^* = (\theta^*,\alpha^*)\transpose$ be a minimizer of $F_\beta(\theta,\alpha)$,
	and $x_0 \in \R^d$ an arbitrary initialization. Let $(x_t)_{t=0}^{T}$ be the iterates given by 
	SGM or \SPLplus/, and $\bar x_T = \tfrac{1}{T+1}\sum_{t=1}^{T+1} x_t$  be the averaged iterate.
 
{\bf SGM.} If  $\lambda_t = \frac{\lambda}{\sqrt{T+1}}$ then 
the iterates $(x_t)$ given by SGM in~\labelcref{eq:SGM} satisfy
\begin{align}\label{eq:conv2paramSGD}
  \E\brackets{  F_\beta(\bar x_T) - F_\beta( x^* ) } \leq\frac{1}{2}\frac{\norm{\theta_0-\theta^*}^2}{\lambda\sqrt{T+1}} 
  + \frac{1}{2}\frac{( \alpha_0 - \alpha^*)^2}{\lambda\sqrt{T+1}} + \frac{\lambda\mathsf{L}_{\mathrm{SGM}}^2  }{\sqrt{T+1}}, 
 \end{align}
 where  
\begin{align}\label{eq:Lip-SGM}
\mathsf{L}_{\mathrm{SGM}}^2 &= \E_z\brackets{\frac{M(z)^2+1 }{(1-\beta)^2}+ 1  } 
\end{align}

{\bf \SPLplus/.} If  $\lambda_{\alpha,t} = \frac{\lambda_{\alpha}}{\sqrt{T+1}}$ 
 and 	$\lambda_{\theta,t} = \frac{\lambda_{\theta}}{\sqrt{T+1}}$, then the iterates $(x_t)$ given by \SPLplus/ given in~\cref{lem:closed-form-update}
  satisfy
\begin{align}\label{eq:conv2param}
  \E\brackets{  F_\beta(\bar x_T) -  F_\beta( x^* ) } \leq\frac{1}{2}\frac{\norm{\theta_0-\theta^*}^2}{\lambda_\theta\sqrt{T+1}} 
  + \frac{1}{2}\frac{( \alpha_0 - \alpha^*)^2}{\lambda_\alpha\sqrt{T+1}} + \frac{\lambda_\alpha \mathsf{L}_{\mathrm{SPL}}^2  }{\sqrt{T+1}}, 
\end{align}
	where 
\begin{align}
\label{eq:Lip-SPL}
\mathsf{L}_{\mathrm{\SPLplus/}}^2 = \E_z \brackets{ \frac{\frac{\lambda_\theta }{\lambda_\alpha} M(z)^2 + 1 }{(1-\beta)^2}   }.
\end{align}
\end{restatable}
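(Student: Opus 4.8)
The plan is to recognize both SGM and \SPLplus/ as instances of the stochastic model-based iteration~\labelcref{eq:stoch-model-based-update} — for \SPLplus/ with the \emph{weighted} proximal term of~\labelcref{eq:stoch-model-based-update-2} — and then invoke the master convergence theorem of~\citet{davis2019stochastic} for such methods in the convex, Lipschitz regime. That theorem needs three ingredients: (i) each sampled objective $F_\beta(\cdot;z)$ is convex; (ii) the stochastic models are convex, exact at the current iterate ($m_t(x_t;z)=F_\beta(x_t;z)$ for almost every $z$), and are stochastic minorants ($\E_z\brackets{m_t(x;z)}\le F_\beta(x)$ for every $x$); and (iii) the models have subgradients uniformly bounded in mean square, $\sup_t \E_z\normsq{s_t}\le \mathsf{L}^2$ for $s_t\in\partial m_t(x_t;z)$. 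Under (i)--(iii), with constant step size $\lambda_t\equiv\lambda/\sqrt{T+1}$ the averaged iterate satisfies $\E\brackets{F_\beta(\bar x_T)-F_\beta(x^*)}\le \frac{\normsq{x_0-x^*}}{2\lambda\sqrt{T+1}}+\frac{\lambda\mathsf{L}^2}{\sqrt{T+1}}$, the weighted-norm version being identical after a change of metric. So the whole proof reduces to checking (i)--(iii) for each of the two models and reading off $\mathsf{L}$. Ingredient (i) is immediate: $F_\beta(\theta,\alpha;z)=\alpha+\tfrac1{1-\beta}\max\braces{\ell(\theta;z)-\alpha,0}$ is convex in $(\theta,\alpha)$ whenever $\ell(\cdot;z)$ is, being a linear term plus the composition of the convex nondecreasing map $u\mapsto\max\braces{u,0}$ with the jointly convex map $(\theta,\alpha)\mapsto\ell(\theta;z)-\alpha$.

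For SGM, the model~\labelcref{eq:sgm-model} is affine, hence convex, and tangent to $F_\beta(\cdot;z)$ at $x_t$ by construction; convexity of $F_\beta(\cdot;z)$ makes this tangent line a global lower bound, so (ii) holds pointwise, a fortiori in expectation. For (iii), the (constant) subgradient of the model is $g_t$ from~\labelcref{eq:cvar-sampled-subgrad}, that is $g_t=\paren{\tfrac{u_t}{1-\beta}v_t,\;1-\tfrac{u_t}{1-\beta}}\transpose$ with $u_t\in[0,1]$ and $\norm{v_t}\le M(z)$; expanding the square and using $u_t^2\le u_t$ and $-\tfrac{2u_t}{1-\beta}\le 0$ gives $\normsq{g_t}\le \tfrac{M(z)^2+1}{(1-\beta)^2}+1$, so $\E_z\normsq{g_t}\le\mathsf{L}_{\mathrm{SGM}}^2$ of~\labelcref{eq:Lip-SGM}. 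Substituting $\lambda_t=\lambda/\sqrt{T+1}$ and $\normsq{x_0-x^*}=\normsq{\theta_0-\theta^*}+(\alpha_0-\alpha^*)^2$ into the master rate produces~\labelcref{eq:conv2paramSGD}.

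For \SPLplus/ two things change. First, the minorant property~(ii): $m_t^{\mathrm{SPL}}(\cdot;z)$ in~\labelcref{eq:spl-model} is again a linear term plus $\max\braces{\cdot,0}$ composed with an affine map, hence convex; it is exact at $x_t$ since $\ell_t=\ell(\theta_t;z)$; and since $\ell_t+\inner{v_t}{\theta-\theta_t}\le\ell(\theta;z)$ by convexity of $\ell(\cdot;z)$ while $u\mapsto\max\braces{u-\alpha,0}$ is nondecreasing, one gets $m_t^{\mathrm{SPL}}(\theta,\alpha;z)\le F_\beta(\theta,\alpha;z)$ pointwise. Second, the proximal term in~\labelcref{eq:stoch-model-based-update-2} is a weighted square; I would rescale $\hat\theta=\theta/\sqrt{\lambda_{\theta,t}}$, $\hat\alpha=\alpha/\sqrt{\lambda_{\alpha,t}}$ so that~\labelcref{eq:stoch-model-based-update-2} becomes an ordinary unit-step model-based step on $\tilde m_t(\hat x;z):=m_t^{\mathrm{SPL}}(\sqrt{\lambda_{\theta,t}}\,\hat\theta,\sqrt{\lambda_{\alpha,t}}\,\hat\alpha;z)$. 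By the chain rule $\normsq{\partial_{\hat x}\tilde m_t}=\lambda_{\theta,t}\normsq{\partial_\theta m_t^{\mathrm{SPL}}}+\lambda_{\alpha,t}(\partial_\alpha m_t^{\mathrm{SPL}})^2$; here $\norm{\partial_\theta m_t^{\mathrm{SPL}}}\le\tfrac{\norm{v_t}}{1-\beta}\le\tfrac{M(z)}{1-\beta}$ and $\partial_\alpha m_t^{\mathrm{SPL}}=1-\tfrac{c_t}{1-\beta}$ with $c_t\in[0,1]$, hence $(\partial_\alpha m_t^{\mathrm{SPL}})^2\le\tfrac1{(1-\beta)^2}$. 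Plugging $\lambda_{\theta,t}=\lambda_\theta/\sqrt{T+1}$ and $\lambda_{\alpha,t}=\lambda_\alpha/\sqrt{T+1}$ bounds $\E_z\normsq{\partial_{\hat x}\tilde m_t}$ by $\tfrac{\lambda_\alpha}{\sqrt{T+1}}\,\E_z\brackets{\tfrac{(\lambda_\theta/\lambda_\alpha)M(z)^2+1}{(1-\beta)^2}}=\tfrac{\lambda_\alpha\mathsf{L}_{\mathrm{SPL}}^2}{\sqrt{T+1}}$, with $\mathsf{L}_{\mathrm{SPL}}^2$ as in~\labelcref{eq:Lip-SPL}. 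Since $\normsq{\hat x_0-\hat x^*}=\sqrt{T+1}\paren{\tfrac{\normsq{\theta_0-\theta^*}}{\lambda_\theta}+\tfrac{(\alpha_0-\alpha^*)^2}{\lambda_\alpha}}$, feeding these two quantities into the master rate (unit step, $T+1$ iterations) and simplifying yields~\labelcref{eq:conv2param}.

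The convexity checks and the subgradient-norm algebra are routine. The step I expect to demand the most care is the \SPLplus/ case: getting the change of metric for the weighted proximal step exactly right, correctly propagating the ratio $\lambda_\theta/\lambda_\alpha$ into $\mathsf{L}_{\mathrm{SPL}}^2$, and verifying that the telescoping of the distances $\normsq{\hat x^*-\hat x_t}$ in the rescaled geometry translates back into the stated split of the leading term into a $\normsq{\theta_0-\theta^*}/\lambda_\theta$ piece and an $(\alpha_0-\alpha^*)^2/\lambda_\alpha$ piece. I would also be careful that condition~(ii) genuinely requires the minorant inequality for $m_t^{\mathrm{SPL}}$ (the monotonicity-of-$\max$ argument), not merely exactness at $x_t$.
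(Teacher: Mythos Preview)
Your proposal is correct and follows essentially the same approach as the paper: verify the Davis--Drusvyatskiy model-based assumptions (convexity, one-sided accuracy/minorant, Lipschitz bound) for both the SGM and SPL models, and for \SPLplus/ reduce the two-regularization subproblem to a single-regularization one via a linear change of variables before invoking their Theorem~4.4. The only cosmetic difference is the specific rescaling---the paper rescales only $\alpha$ by $\sqrt{\lambda_\theta/\lambda_\alpha}$ (and correspondingly scales the loss) to obtain a problem with step $\hat\lambda=\sqrt{\lambda_\theta\lambda_\alpha}$, whereas you rescale both coordinates to obtain a unit-step problem; the algebra and final bounds coincide.
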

This result follows by adapting Theorem 4.4 in \citet{davis2019stochastic}, and we verify the
assumptions necessary in \cref{sec:proofs}.
In particular, the best bound achieved by SGM via minimizing in $\lambda$ the RHS of \labelcref{eq:conv2paramSGD} 
is with 
\begin{align}
	\label{eq:best-bounds-lambda-sgm}
	\lambda = \frac{ \Delta }{\mathsf{L}_{\mathrm{SGM}}\sqrt{2}}
\end{align}
yielding the rate 
\begin{align*}
	\E \brackets{ F_\beta(\bar x_T) - F_\beta(x^*) } \leq \frac{\sqrt{2} \norm{x_0-x^*} \mathsf{L}_{\mathrm{SGM}}}{ \sqrt{T+1} }. 
\end{align*}
Similarly, for \SPLplus/, the best bound is achieved at 
\begin{align}
	\label{eq:best-bounds-lambdas}
	\lambda_\alpha = \frac{ \abs{ \alpha_0 - \alpha^* } (1-\beta) }{ \sqrt{2} } , \quad \lambda_\theta = \frac{ \norm{\theta_0 - \theta^*} (1-\beta) }{ \sqrt{2}\, \E_z [M(z)] },
\end{align}
giving us the rate 
\begin{align*}
	 \E \brackets{ F_\beta(\bar x_T) - F_\beta(x^*) } &\leq \frac{ \norm{\theta_0-\theta^*} \E_z[M(z)] + \abs{\alpha_0 - \alpha^*} }{ \sqrt{2}(1-\beta)\sqrt{T+1} } \\
	& \qquad +  \frac{ \norm{\theta_0 - \theta^*} \E_z[M(z)^2] / \E_z[M(z)] + \abs{\alpha_0 - \alpha^*} }{\sqrt{2} (1-\beta) \sqrt{T+1} }.
\end{align*}
We can now use \cref{thm:general-convex-rate} to directly compare the convergence rate of SGM in~(\ref{eq:conv2paramSGD}) and \SPLplus/ in~(\ref{eq:conv2param}). First, both methods converge at the $O\paren{\nicefrac{1}{\sqrt{T+1}}}$ rate. 
The main difference is in the constants. To ease the comparison, let $\lambda_\alpha = \lambda_\theta = \lambda$. In this case, we can see that the Lipschitz constant of SGM in~(\ref{eq:Lip-SGM}) is always greater than the Lipschitz constant of \SPLplus/ in~(\ref{eq:Lip-SPL}), thus \SPLplus/ has a better constant in its rate of convergence. This is another way to confirm that \SPLplus/ uses a better model of the objective function as compared to SGM. 
Yet another advantage of \SPLplus/ is the flexibility of having two regularization parameters $\lambda_{\theta}$ and $\lambda_{\alpha}$, which allows for a method that is independent of the units of the loss.

\section{Experiments}
\label{sec:experiments}
We design several experiments to compare, and test the sensitivity of 
SGM, SPL with only one regularization, that is the updates in Lemma~\ref{lem:closed-form-update} where $\lambda_{\theta,t}=\lambda_{\alpha,t} = \lambda_t$, 
and our proposed \SPLplus/ updates. 

\subsection{Synthetic data}
\label{sec:exp-syn}
First we study the sensitivity of the methods to choices of $\lambda$ 
when minimizing the CVaR objective \labelcref{eq:variational-cvar}.
We use three different synthetic distributions,
similar to the setup of~\citet{holland2021learning}, 
where we experiment various combinations of loss functions $\ell(\cdot; z)$
and data distributions controlled by noise $\zeta$ (Table~\ref{tab:exp-losses}).
For all problems we set the dimension to be $d=10$. 
For regression problems, $\theta_{\mathrm{gen}}\sim\gU([0,1]^d)$,\\[.5pc]
\begin{minipage}{0.57\textwidth}
and for classification (logistic regression) we use $\theta_{\mathrm{gen}}\sim\gU([0,10]^d)$
to increase linear separability. The loss functions and target generation schemes are listed in \cref{tab:exp-losses}.
Each target of the corresponding problem contains an error $\epsilon$ from one of the distributions in
\cref{tab:exp-error-distrs}, which controls the difficulty level of the problem.
\end{minipage}
\hfill
\begin{minipage}{0.39\textwidth}
		\centering
		 \begin{tabular}{@{}ll@{}}
				\toprule
				 \textbf{Distribution of $\zeta$}  & \textbf{Parameters} \\ \midrule
				 $\mathrm{Normal}(\mu,\sigma^2)$ & $\mu=0$, $\sigma = 2$   \\
				 $\mathrm{Gumbel}(\mu,\beta)$ & $\mu=0$, $\beta=4$   \\
				 $\mathrm{LogNormal}(\mu, \sigma^2)$ &  $\mu=2$, $\sigma = 1$ \\ \bottomrule
				\end{tabular}
	   \captionof{table}{Error distributions in 1D.}
	   \label{tab:exp-error-distrs}
\end{minipage} 

\begin{table*}
	\caption{Loss functions and data generation used for synthetic problems.
	The error distributions for $\zeta$ are described in \cref{tab:exp-error-distrs}.
	We use $\sigma(\cdot)$ to denote the sigmoid function, and all $x$'s are sampled uniformly from the unit sphere.}
	\label{tab:exp-losses}
	\centering \begin{tabular}{@{}lll@{}}
	\toprule
	 \textbf{Task} & \textbf{Loss} $\ell(\theta;x,y)$  &
  \textbf{Target}  \\ \midrule
	 Regression & $ \frac{1}{2}(x\transpose \theta - y)^2$  &  $y = x\transpose \theta_{\mathrm{gen}} + \zeta$ \\
	 Regression & $ \abs{x\transpose \theta - y}$  & $y = x\transpose\theta_{\mathrm{gen}} + \zeta$ \\
	 Classification & $ \log\paren{ 1 + \exp\paren{ -yx\transpose\theta }}$ & $y = 1$ w.p. $\sigma(x\transpose\theta_{\mathrm{gen}} + \zeta)$ and $-1$ otherwise. \\ \bottomrule
	\end{tabular}
\end{table*}

Since the expectation in the CVaR objective \labelcref{eq:variational-cvar} is difficult to compute in closed form, we
evaluate the suboptimality gaps using an empirical average over $N = 10^6$ data
points sampled i.i.d. from the corresponding distribution under a single fixed seed. 
This is done for each error distribution and loss function combination, 
each giving us the discretization
\begin{align}
	\label{eq:empirical-cvar}
	\tilde F_\beta(\theta,\alpha) = \alpha + \frac{1}{1-\beta}\frac{1}{N}\sum_{i=1}^N \max\braces{\ell(\theta;z_i) -\alpha ,\,0}. \,\hspace{-0.18cm}
\end{align}
We set $\beta=0.95$ for all experiments, and thus have omitted $\beta$ from all plot descriptions.
We run full-batch L-BFGS to obtain the optimal values for comparison, recorded as
$\theta^*, \alpha^*$, and $F^* \coloneqq \tilde F_\beta(\theta^*,\alpha^*)$.
For initialization, we set $\alpha_0\sim\gU(0,1)$ and $\theta_0\sim\gN(0,I_d)$
at initialization for all algorithms we compare.
They are run for $T=100,000$ iterations using 5 different seeds that control the randomness 
of initialization and sampling during
the course of optimization. In the sensitivity plots (\Cref{fig:subopt-vs-lambda,fig:iters-vs-lambda}),
solid lines show the median values, while the shaded regions indicate the range over the random seeds.
All objective evaluations are on $\tilde F_\beta(\bar\theta_t,\bar\alpha_t)$ using 
the averaged iterates.

We employ a decreasing step size $\lambda_t = \nicefrac{\lambda}{\sqrt{t+1}}$ for SGM and SPL, while
$\lambda_{t,\alpha} = \nicefrac{\lambda \ell_0}{\sqrt{t+1}}$ and $\lambda_{t,\theta} = \nicefrac{\lambda }{\ell_0 \sqrt{t+1}}$ 
for \SPLplus/.
We study the sensitivity of the methods to $\lambda$, varied
over a logarithmically-spaced grid $10^{-6}, 10^{-5}, \dots, 10^{4}$,
densified around $\lambda=1$ using the extra grid $10^{-1.5}, 10^{-0.5}, \dots, 10^{1.5}$.

\Figref{fig:subopt-vs-lambda} shows the final suboptimality achieved by SGM, SPL,
and \SPLplus/ for different values of $\lambda$. 
For smooth losses (squared and logistic) we see that SPL and \SPLplus/ are significantly 
more robust and admit a much larger range of $\lambda$ for which they achieve a low suboptimality.
Interestingly, for the absolute loss, the difference is barely noticeable.
We also observe that \SPLplus/ often admits a wider basin of good settings for $\lambda$ as compared to SGM and even SPL. 
Moreover, $\lambda =1$ is often in the set of good parameter choices for \SPLplus/. 
This suggest that our scaling of $
\lambda \ell_0$ and $\nicefrac{\lambda}{\theta_0}$, as motivated by balancing units, lead to a more stable and easy to tune method by choosing $\lambda$ around $1$.
In \Figref{fig:iters-vs-lambda}, we perform the sensitivity analysis 
under a fixed accuracy target $\tilde F(\theta,\alpha) - \tilde F^* \leq \epsilon$,
and draw similar stability conclusions.

\begin{figure*}
	\begin{center}
		\includegraphics[scale=0.8]{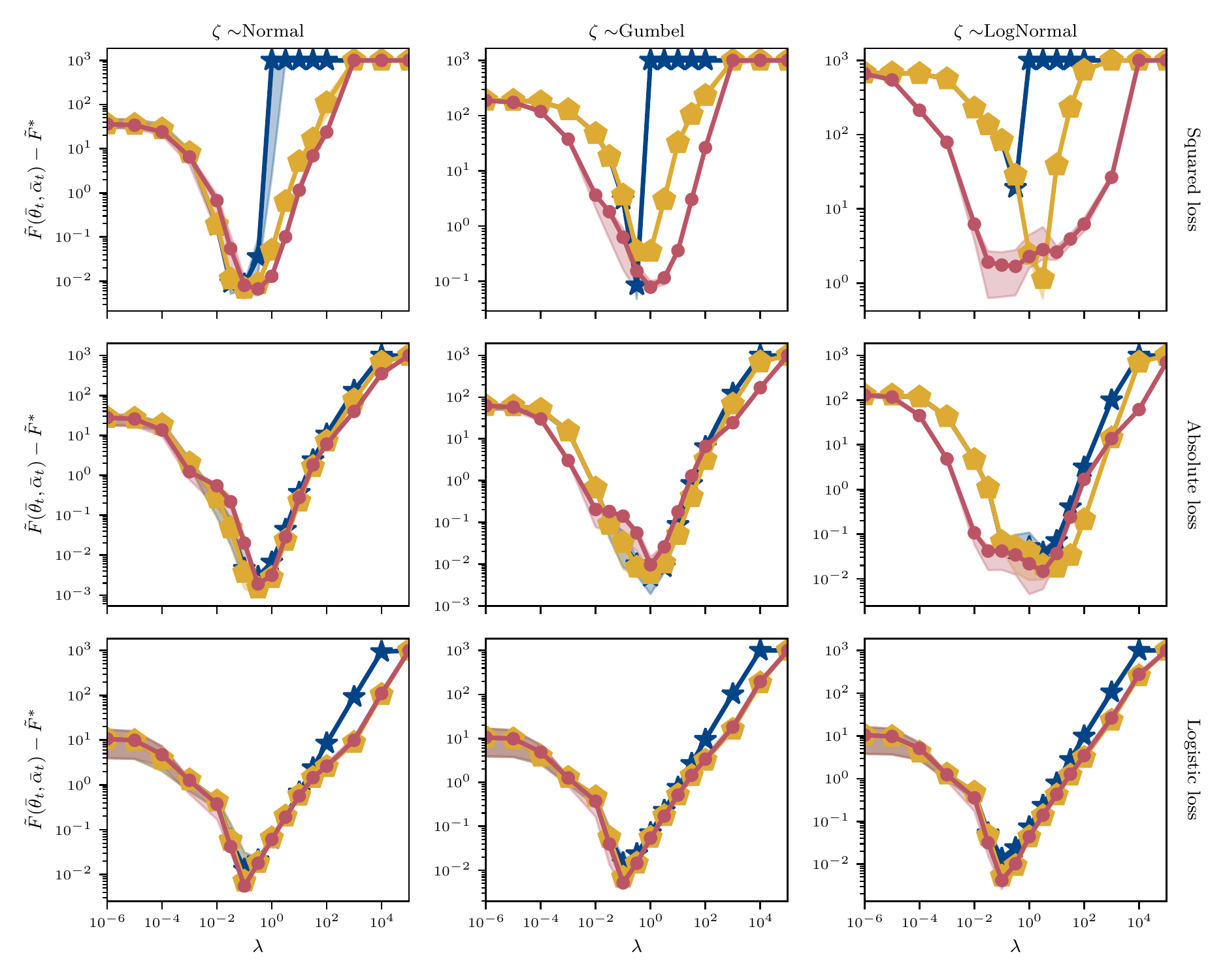}
		\includegraphics{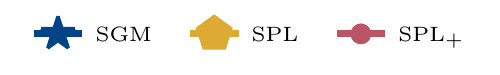}
	\end{center}
	\caption{Sensitivity of final suboptimality to step size choices under a fixed $T=10^5$ budget. 
	The first two rows are regression tasks under the $\ell_1$ and $\ell_2$ losses, 
	while the third row correspond to a binary classification task under the logistic loss. 
	The columns correspond to different noise distributions in the data generation that controls the difficulty of the problem. 
	}
	\label{fig:subopt-vs-lambda}
\end{figure*}

\subsection{Real data}
\label{sec:exp-real}
Finally, we present the same experiment on four real datasets:
\texttt{YearPredictionMSD}, \texttt{E2006-tfidf}, (binary) \texttt{mushrooms} and (binary) \texttt{Covertype},
all from the LIBSVM repository \citep{chang2011libsvm}.
Similar to the synthetic experiments, we set $\beta=0.95$ and compute $\theta^*$ and $\alpha^*$ using L-BFGS.
The objective is now by default the empirical CVaR in~\labelcref{eq:empirical-cvar}
since $P$ is the empirical distribution,
\begin{align*}
	F_\beta(\theta,\alpha) = \alpha + \frac{1}{1-\beta}\frac{1}{n}\sum_{i=1}^n \max\braces{\ell(\theta;z_i)-\alpha,\,0}
\end{align*}
where $N$ is the number of examples in the training split. The loss function 
$\ell(\cdot;z_i)$ is the squared loss for \texttt{YearPredictionMSD} and \texttt{E2006-tfidf}, and logistic loss for 
\texttt{mushrooms} and \texttt{Covertype}.
For the comparison between SGM, SPL, and \SPLplus/, we run the methods for $200N$ iterations 
(except on \texttt{E2006-tfidf} where we only run for $10N$ iterations due to its size). 
All convergence plots are based on the best $\lambda$ at the end of training for each method.

For the least squares problem in \Figref{fig:yearpred} and \Figref{fig:e2006}, we again see that both SPL and \SPLplus/ 
can tolerate a much larger range of step sizes. 
The best $\lambda$ is attained at or near $\lambda=1$
for \SPLplus/, which, although performs slightly worse than SPL with the best selected $\lambda$,
allows us to consistently choose $\lambda=1$ as a default.
For the logistic regression problem in \Figref{fig:covtype} and \Figref{fig:mushrooms}, 
SPL and \SPLplus/ are again similar or better than SGM,
although $\lambda=1$ is no longer close to optimal for SPL and \SPLplus/.

\begin{figure}[h]
	\begin{center}
		\includegraphics[width=0.8\textwidth]{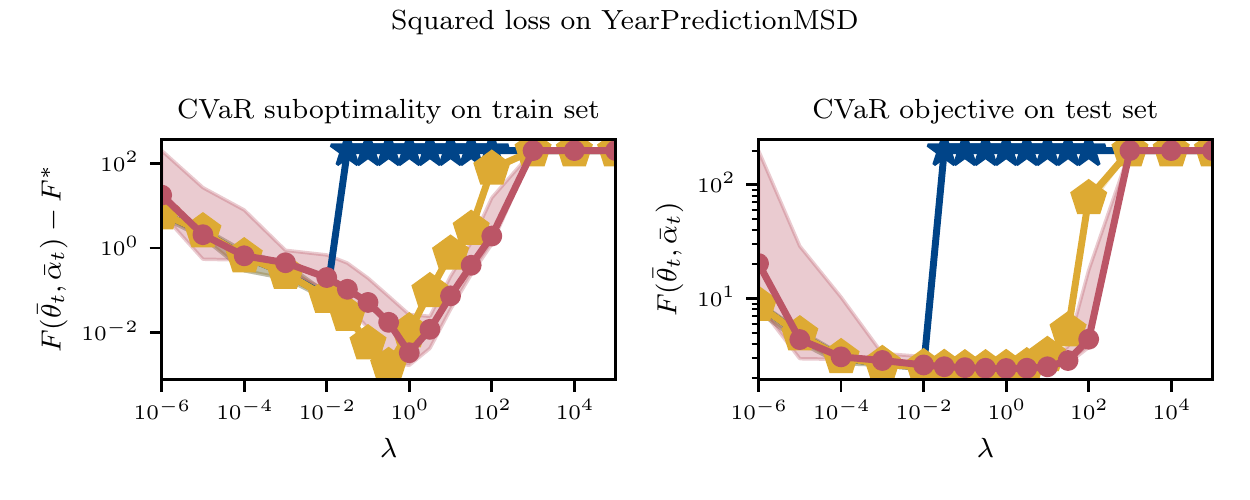}
		\includegraphics[width=0.8\textwidth]{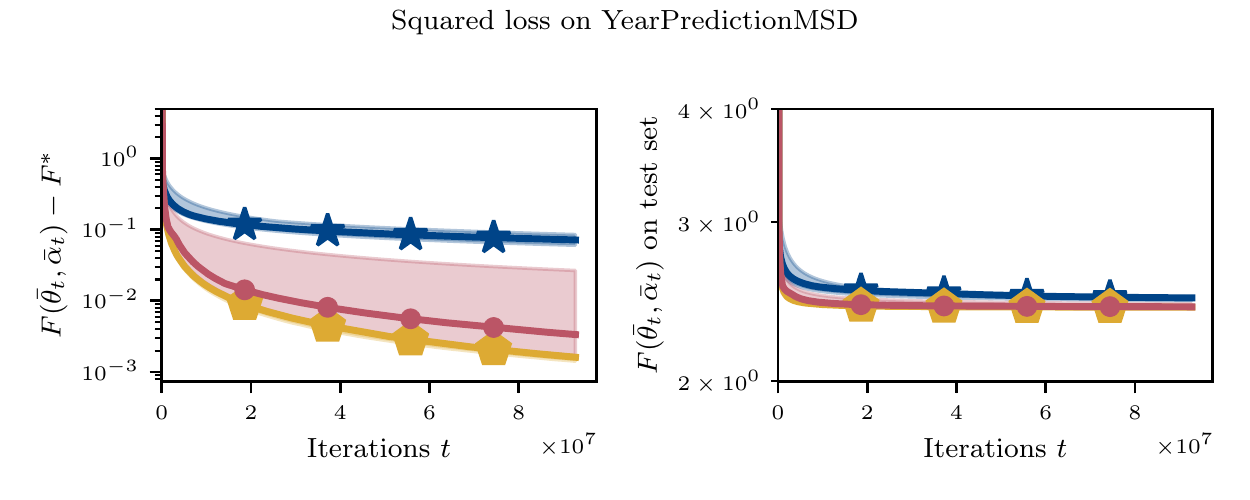}
		\includegraphics[scale=1.2]{figs/legend.pdf}
	\end{center}
	\caption{Sensitivity and convergence plots on the \texttt{YearPredictionMSD} linear regression task (overdetermined).}	\label{fig:yearpred}
\end{figure}

\begin{figure}[h]
	\begin{center}
		\includegraphics[width=0.8\textwidth]{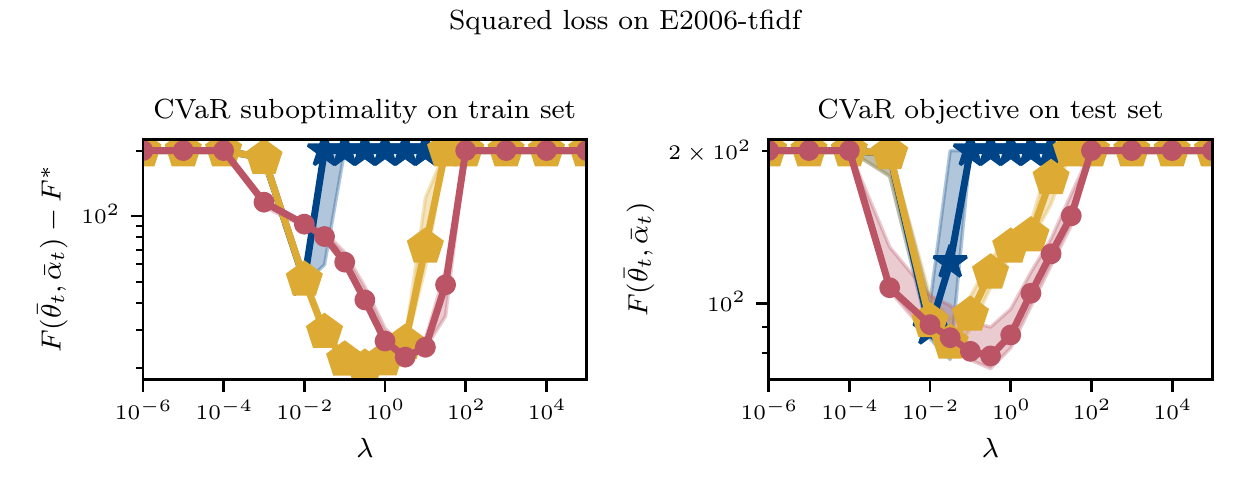}
		\includegraphics[width=0.8\textwidth]{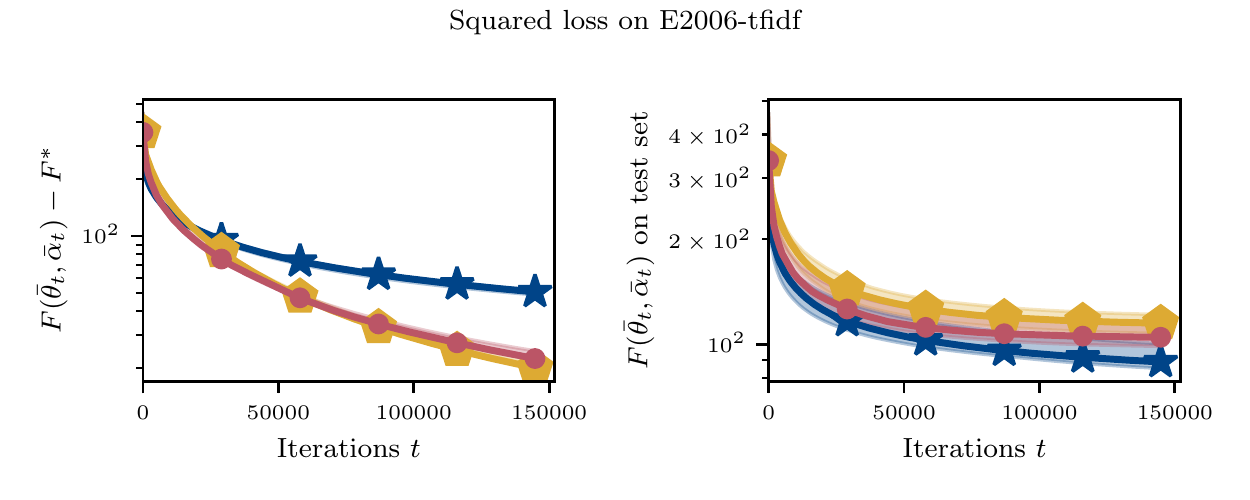}
		\includegraphics[scale=1.2]{figs/legend.pdf}
	\end{center}
	\caption{Sensitivity and convergence plots on the \texttt{E2006-tfidf} linear regression task (underdetermined).}	\label{fig:e2006}
\end{figure}

\begin{figure}[h]
	\begin{center}
		\includegraphics[width=0.8\textwidth ]{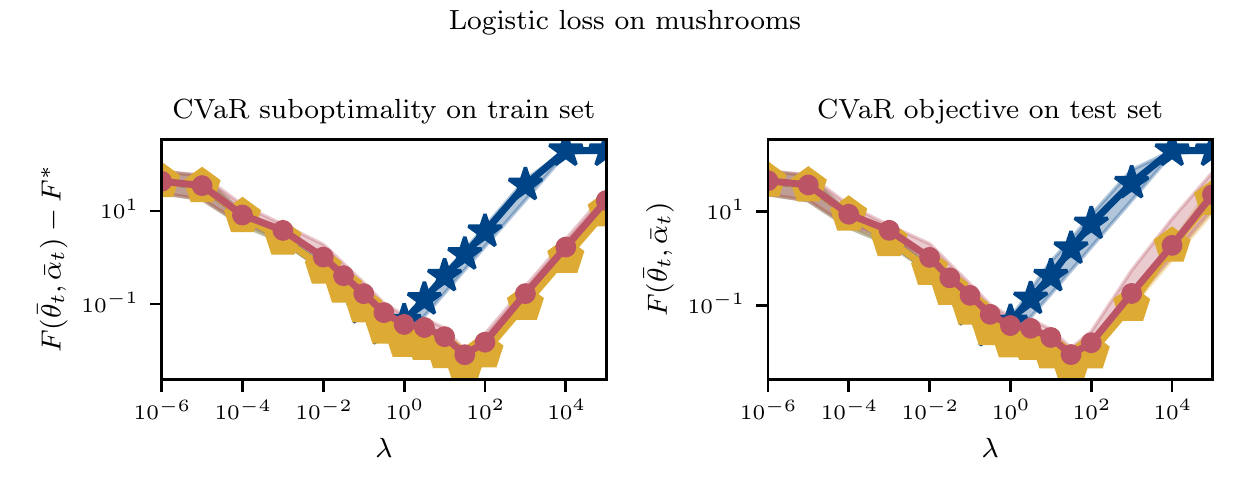}
		\includegraphics[width=\textwidth]{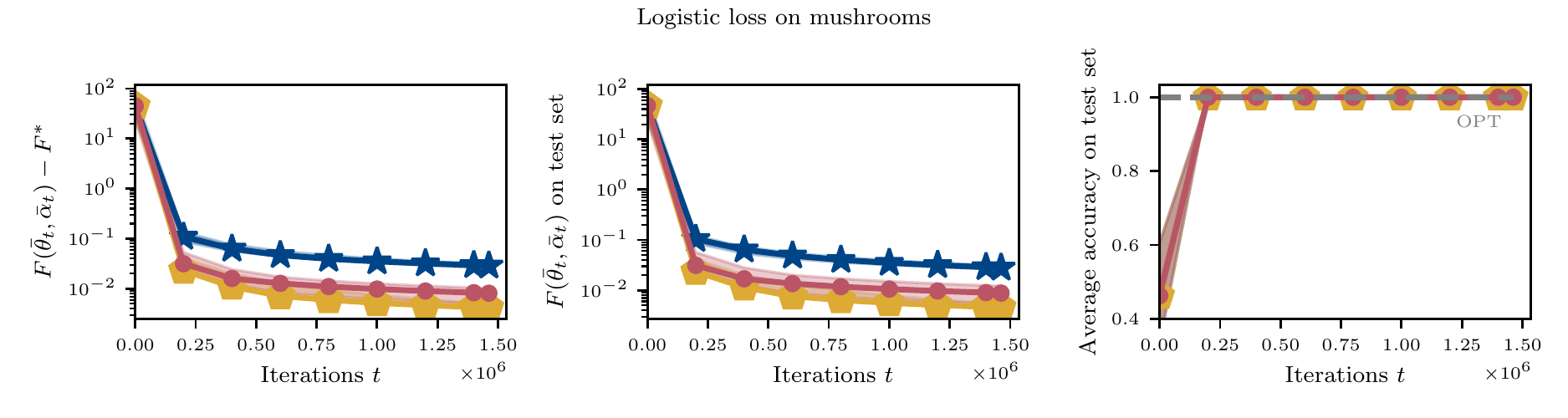}
		\includegraphics[scale=1.2]{figs/legend.pdf}
	\end{center}
	\caption{Sensitivity and convergence plots on the \texttt{mushrooms} binary classification task.
	The grey dashed line is the average accuracy on the test set achieved by $\theta^*$. }
	\label{fig:mushrooms}
\end{figure}

\begin{figure}[h]
	\begin{center}
		\includegraphics[width=0.8\textwidth ]{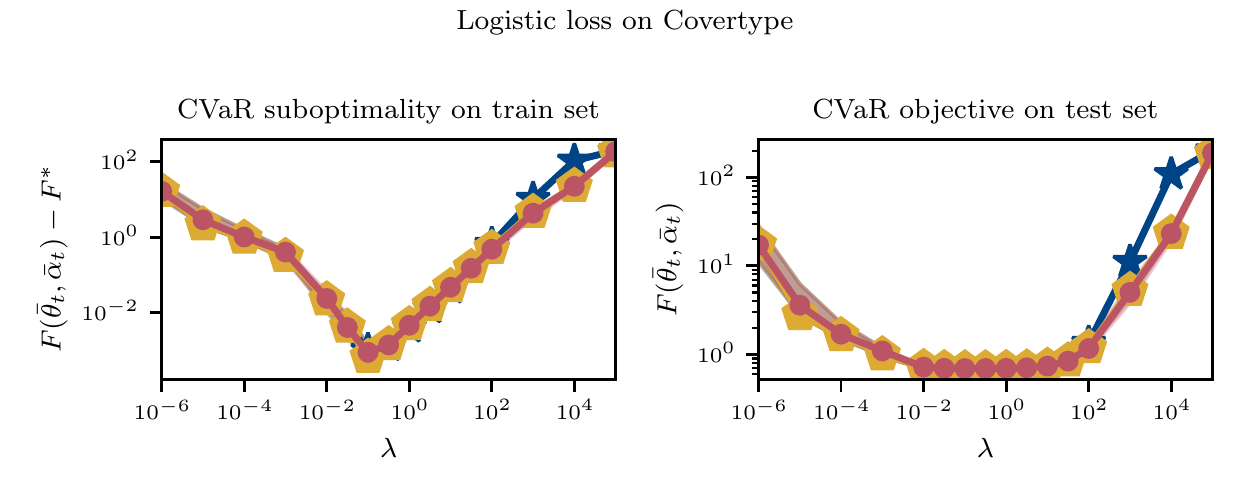}
		\includegraphics[width=\textwidth]{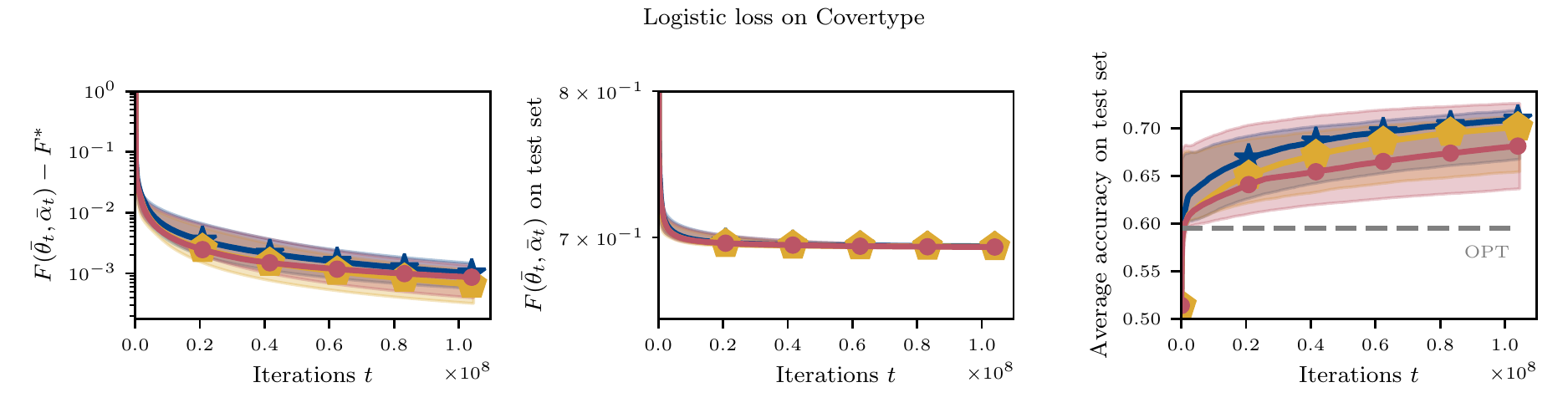}
		\includegraphics[scale=1.2]{figs/legend.pdf}
	\end{center}
	\caption{Sensitivity and convergence plots on the \texttt{Covertype} binary classification task.
	The grey dashed line is the average accuracy on the test set achieved by $\theta^*$. 
	Note that the reported accuracy is averaged across the entire training set, but since SPL and \SPLplus/ reached a lower 
	CVaR objective (rather than the average loss objective), it is reasonable that its average accuracy is lower.
	Furthermore, the optimal accuracy on the test set may seem surprisingly low. 
	The justification behind this is that the objective being minimized is the CVaR objective, while the metric being plotted 
	is the average test accuracy. The CVaR objective puts more emphasis on the top 
 	$1-\beta$ fraction of the examples, and so when the dataset is not linearly separable,
	the average accuracy can be poor. This has also been noted previously in \citet{curi2020adaptive}.
	Unfortunately, computing the accuracy of the examples with the top $1-\beta$ 
	fraction of the losses does not necessarily give us more insight into the accuracy metric. 
	This is due to the possibility that there
	are only a few outliers and the classifier found by minimizing the CVaR is still getting the majority 
	of the examples wrong, just with lower losses.}
	\label{fig:covtype}
\end{figure}

\section{Conclusion and future work}
Our numerical evidence suggests that for the CVaR minimization problem,
while both SGM and SPL can be tuned to achieve similar performance, 
 \SPLplus/  is often the most tolerant to misspecified step sizes.
To further speed up \SPLplus/  and make it more competitive over SGM, in future work we will consider using  non-uniform sampling to bias towards training examples
with higher losses (as in \citet{curi2020adaptive,sagawa2019distributionally}).

Efficient CVaR minimization with a stochastic algorithm opens up the possibility
for new applications in machine learning. 
For instance, we could consider models that trade-off between low average risk
and heavy tails by adding the CVaR objective as a regularizer: $$\min_{\theta\in\R^d} R_{\mathrm{ERM}}(\theta) + \rho R_{\mathrm{CVaR}_\beta}(\theta)$$ 
where $\rho>0$ is a parameter that captures this trade-off. Controlling this trade-off is important as machine learning models are increasingly deployed in safety-critical applications that call for control over the likelihood of failure.
As future work, we also see applications in training neural networks, where CVaR can be used to disincentivize the activations from being saturated too often, and thus help in speeding up training. This would offer an alternative to normalization layers,  such as batchnorm or layernorm.

\clearpage
\acks{We would like to thank Vasileios Charisopoulos and Frederik Künstner
for helpful feedback on an earlier draft. We acknowledge the support of the Natural Sciences and Engineering Research Council of Canada 
(NSERC), Grant No. PGSD3-547276-2020. This work was partially done during S. Y. Meng's internship at the Flatiron Institute.}

\vskip 0.2in
\bibliography{refs}

\newpage
\appendix
\section{\SPLplus/ derivation for CVaR minimization}
\label{sec:app-alg}

Before deriving the updates, we first introduce the following lemma 
based on the truncated model from \citet{asi2019stochastic}.

\begin{lem}[Truncated model] \label{lem:truncated-model}
	Consider the problem 
	\begin{align*}
	  x_{t+1} = \argmin_{x\in\R^n} \; \max\braces{c + \inner{a }{x-x_t},\, 0} + \frac{1}{2\lambda} \normsq{x-x_t}.
	\end{align*}
	for some scalar $c$ and vector $a\in\R^n$. The solution can be
    written in closed form as
	\begin{align*}
	  x_{t+1} = x_t -  \min\braces{\lambda,\, \frac{ \max\braces{c,\,0} }{\normsq{a}}} a
	\end{align*} 
\end{lem}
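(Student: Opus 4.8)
The plan is to solve the inner minimization directly, exploiting convexity and a reduction to one dimension. First I would substitute $y = x - x_t$, turning the problem into $\min_{y\in\R^n} g(y)$ with $g(y) = \max\{c + \inner{a}{y},\,0\} + \frac{1}{2\lambda}\normsq{y}$; this $g$ is strongly convex and coercive, hence has a unique minimizer. The degenerate case $a = \vzero$ is immediate (then $y^\star = \vzero$, and the claimed step is a multiple of $a$ in any case), so I would assume $a \neq \vzero$.

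Second, I would reduce to a scalar problem. Decomposing $y = y_\parallel + y_\perp$ with $y_\parallel \in \mathrm{span}(a)$ and $\inner{a}{y_\perp} = 0$, the affine term inside the $\max$ depends only on $y_\parallel$, while $\normsq{y} = \normsq{y_\parallel} + \normsq{y_\perp}$; hence at the optimum $y_\perp = \vzero$ and $y^\star = -\eta\, a$ for a single scalar $\eta \in \R$. It then remains to minimize the one-dimensional convex function $h(\eta) = \max\{c - \eta\normsq{a},\,0\} + \frac{\eta^2}{2\lambda}\normsq{a}$ over $\eta\in\R$.

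Third, I would minimize $h$ via first-order optimality: at a minimizer, $\tfrac{\eta}{\lambda}$ lies in the subdifferential of $t\mapsto\max\{t,0\}$ evaluated at $c - \eta\normsq{a}$, which is $\{0\}$ for a negative argument, $\{1\}$ for a positive argument, and $[0,1]$ at $0$. This produces three branches according to the sign of $c - \eta\normsq{a}$: the branch $\eta = \lambda$ (consistent when $c > \lambda\normsq{a}$), the branch $\eta = 0$ (consistent when $c < 0$), and the kink branch $\eta = c/\normsq{a}$ (consistent when $0 \le c \le \lambda\normsq{a}$). These are mutually exclusive and exhaustive, and in every case $\eta = \min\{\lambda,\ \max\{c,0\}/\normsq{a}\}$. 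Unwinding $y^\star = x_{t+1} - x_t = -\eta\, a$ gives the stated closed form. (Equivalently, one can split $h$ at its kink $\eta = c/\normsq{a}$ into two quadratic pieces, minimize each in closed form, and compare the candidates, arriving at the same expression.)

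I do not anticipate a genuine obstacle; the computation is elementary. The only points requiring a little care are the observation that the minimizer lies in $\mathrm{span}(a)$ — which is what collapses the problem to one variable — and the bookkeeping of the three cases so that they assemble cleanly into the single $\min\{\cdot,\cdot\}$ expression, together with a one-line treatment of $a=\vzero$.
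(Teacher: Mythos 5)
Your proof is correct, but it takes a genuinely different route from the paper. The paper treats the objective as $h(\inner{a}{x}+b)$ with $h=\max\{\cdot,0\}$ and invokes two pieces of proximal calculus from \citet{beck2017first}: the formula for the prox of a composition with an affine map (Theorem 6.15), which already collapses the problem to one dimension, and the identity for the prox of a support function via projection (Theorem 6.46), using that $\max\{\cdot,0\}$ is the support function of $[0,1]$; the final expression then falls out of $\mathrm{proj}_{[0,1]}(v)=\min\{\max\{v,0\},1\}$. You instead argue from scratch: the orthogonal decomposition $y=y_\parallel+y_\perp$ plays the role of Beck's Theorem 6.15 in reducing to the scalar problem $h(\eta)=\max\{c-\eta\normsq{a},0\}+\tfrac{\eta^2}{2\lambda}\normsq{a}$, and your three-branch subdifferential analysis (the condition $\eta/\lambda\in\partial\max\{\cdot,0\}|_{c-\eta\normsq{a}}$) plays the role of the support-function/projection identity. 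I checked the case bookkeeping: the branches $\eta=\lambda$ (when $c>\lambda\normsq{a}$), $\eta=0$ (when $c<0$), and $\eta=c/\normsq{a}$ (when $0\le c\le\lambda\normsq{a}$) are exhaustive and each agrees with $\min\{\lambda,\max\{c,0\}/\normsq{a}\}$, and your handling of $a=\vzero$ is fine. Your argument buys self-containedness and elementarity, at the cost of a bit more case analysis; the paper's buys brevity by leaning on standard prox-calculus results, and its structure generalizes more readily to other truncations $h$ whose prox is known.
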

\begin{proof}
Note that $x_{t+1}$ is the proximal point of the function
\[
    f(x) = h(\dotp{a, x} + b), \quad \text{with} \; \;
    h(z) = \max\braces{z,\, 0}, \; b = c - \dotp{a, x_t}.
\]
centered at $x \equiv x_{t}$. Using~\citet[Theorem 6.15]{beck2017first},
we have
\begin{align}
    \prox_{\lambda f}(x) &= x + \frac{a}{\|a\|^2}\left(
        \prox_{\lambda \norm{a}^2 h}(\dotp{a, x} + b) -
        (\dotp{a, x} + b)
    \right) \notag \\
    &=
    x_t + \frac{a}{\norm{a}^2}\left(
        \prox_{\lambda \norm{a}^2 \max\braces{\cdot,\,0}}(c) - c
    \right)
    \label{eq:composition-prox}
\end{align}
In turn, the max function is the support function of the interval $[0, 1]$.
By~\citet[Theorem 6.46]{beck2017first}, it follows that
\begin{equation}
    \prox_{\lambda \norm{a}^2 \max\braces{\cdot,\,0}}(c) =
    c - \lambda \norm{a}^2 \mathrm{proj}_{[0, 1]}\left(
        \frac{c}{\lambda \norm{a}^2}
    \right).
    \label{eq:max-prox}
\end{equation}
Plugging~\labelcref{eq:max-prox} into~\labelcref{eq:composition-prox}, we obtain
\begin{align*}
    \prox_{\lambda f}(x_t) &= x_t -
        \frac{a}{\norm{a}^2} \cdot \lambda \norm{a}^2 \mathrm{proj}_{[0, 1]}\left(\frac{c}{\lambda \norm{a}^2}\right) \\
        &=
    x_t - \lambda a \cdot \mathrm{proj}_{[0, 1]}\left(
        \frac{c}{\lambda \norm{a}^2}
    \right).
\end{align*}
Writing $\mathrm{proj}_{[0, 1]}(v) = \min\braces{\max\braces{v, \,0},\, 1}$ yields the result.
\end{proof}
\closedupdate*
\begin{proof}
We now derive the the \SPLplus/ updates. Recall that for the CVaR objective, using the model $m_{t}^{\mathrm{SPL}}$ in \labelcref{eq:stoch-model-based-update-2},
the stochastic model-based approach solves the following problem in~\Eqref{eq:stoch-model-based-update-2} at each iteration, that is
\begin{align}
    \label{eq:modelbasedtwo}
	\argmin_{\theta,\alpha}\;
	\alpha + \frac{1}{1-\beta} \max\braces{\ell(\theta_t;z) + \inner{ v_t }{\theta-\theta_t} - \alpha, 0}  + \frac{1}{2\lambda_\theta} \normsq{\theta - \theta_t} + \frac{1}{2\lambda_\alpha}(\alpha - \alpha_t)^2  
\end{align}
where $v_t\in\partial\ell(\theta_t;z)$, and we have temporarily dropped the time-dependence on $\lambda_{\alpha,t}$ and $\lambda_{\theta,t}$.
To arrive at the closed form solution, we will re-write~\labelcref{eq:modelbasedtwo} to fit the format of Lemma~\ref{lem:truncated-model}, and then apply the lemma. 
To this end,  we combine the $\alpha$ in front with its regularization term,
\begin{align*}
	\alpha + \frac{1}{2\lambda_\alpha} (\alpha - \alpha_t)^2 &= \alpha + \frac{1}{2\lambda_\alpha}(\alpha^2 - 2\alpha\alpha_t + (\alpha_t)^2) \\
	&= \frac{1}{2\lambda_\alpha}((\alpha - \alpha_t)^2 + 2\lambda_\alpha \alpha ) \\
	&= \frac{1}{2\lambda_\alpha}((\alpha - \alpha_t)^2 + 2\lambda_\alpha \alpha - 2\lambda_\alpha\alpha_t + \lambda_\alpha^2) + \frac{1}{2\lambda_\alpha}( 2\lambda_\alpha\alpha_t -\lambda_\alpha^2)  \\
	&= \frac{1}{2\lambda_\alpha} (( \alpha - \alpha_t )^2 + 2\lambda_\alpha(\alpha - \alpha_t) + \lambda_\alpha^2 ) + \text{Const.}\\
	&= \frac{1}{2\lambda_\alpha}(\alpha + \lambda_\alpha - \alpha_t )^2 + \text{Const.}
  \end{align*}
  
  We now combine it with the regularization on $\theta$
  \begin{align*}
	\underbrace{\frac{1}{2\lambda_\theta}\normsq{\theta-\theta_t} + \alpha + \frac{1}{2\lambda_\alpha} (\alpha - \alpha_t)^2}_{(*)} &= \frac{1}{2\lambda_\theta} \paren{ \normsq{\theta - \theta_t} + \frac{\lambda_\theta}{\lambda_\alpha} (\alpha-\alpha_t + \lambda_\alpha)^2 } + \text{Const.} \\
	&= \frac{1}{2\lambda_\theta}\paren{ \normsq{\theta-\theta_t} + \paren{ \sqrt{\frac{\lambda_\theta}{\lambda_\alpha} } (\alpha-\alpha_t) + \sqrt{\lambda_\theta\lambda_\alpha}}^2 }  + \text{Const. }
  \end{align*}
  Now we define a rescaled variable $\alpha$ and constant $\alpha_t$ as 
  \begin{align}
	\label{eq:rescaled-alpha}
	\hat\alpha = \sqrt{\frac{\lambda_\theta}{\lambda_\alpha}}\alpha  \qquad \text{and} \qquad \hat \alpha_t = \sqrt{\frac{\lambda_\theta}{\lambda_\alpha}} \alpha_t - \sqrt{\lambda_\theta\lambda_\alpha}
  \end{align}
  to arrive at 
  \begin{align*}
	(*) = \frac{1}{2\lambda_\theta} \paren{\normsq{\theta-\theta_t} + (\hat\alpha - \hat\alpha_t)^2 } + \text{Const. }
  \end{align*}
  As a side note: to see that the units argument is appropriate, observe that $\hat\alpha$ now has $\mbox{units}(\theta)$
  since $\lambda_\theta$ has units inversely proportional to $\lambda_\alpha$.
  This lets us concatenate $\alpha$ with $\theta$ for form a new variable vector $x\in\R^{d+1}$ to have the same units overall.
  Now define 
  \begin{align}
	\label{eq:stacked-theta-alpha}
	x = \begin{pmatrix}
	  \theta \\ \hat\alpha 
	\end{pmatrix} \qquad \text{and} 
  \qquad x_t = \begin{pmatrix}
	  \theta_t \\ \hat\alpha_t
	\end{pmatrix}.
  \end{align}
  The linearization inside $\max\braces{\cdot,\,0}$ in \labelcref{eq:modelbasedtwo} can be written as 
  \begin{align}
	\ell(\theta_t;z) + \inner{\nabla\ell(\theta_t;z)}{\theta - \theta_t} - \alpha &= \ell(\theta_t;z) + \inner{\nabla\ell(\theta_t;z)}{\theta - \theta_t} - \sqrt{\frac{\lambda_\theta}{\lambda_\alpha}} \sqrt{\frac{\lambda_\alpha}{\lambda_\theta}} \alpha \label{eq:tempo9hxz848}\\
	&= \ell(\theta_t;z) + \inner{\nabla\ell(\theta_t;z)}{\theta - \theta_t} - \sqrt{\frac{\lambda_\alpha}{\lambda_\theta}} \hat\alpha + \sqrt{\frac{\lambda_\alpha}{\lambda_\theta}} \hat\alpha_t - \sqrt{\frac{\lambda_\alpha}{\lambda_\theta}} \hat\alpha_t \nonumber \\
	&= \ell(\theta_t;z) - \sqrt{\frac{\lambda_\alpha}{\lambda_\theta}} \hat\alpha_t + \begin{pmatrix}
	  \nabla\ell(\theta_t;z) & - \sqrt{\frac{\lambda_\alpha}{\lambda_\theta}}
	\end{pmatrix} \begin{pmatrix}
	  \theta - \theta_t \\
	  \hat\alpha - \hat\alpha_t
	\end{pmatrix}, \nonumber
  \end{align}
  and so minimizing the model $m_t$ is then equivalent to minimizing the following model $\hat m_t$
  \begin{align*}
	\min_{x\in\R^{d+1}} 
	\max\braces{ c + \inner{a}{x-x_t}, \, 0 } + \frac{1}{2\lambda_\theta} \normsq{x-x_t}
  \end{align*}
  up to constants, where 
  \begin{align*}
	c = \frac{1}{1-\beta} \paren{ \ell(\theta_t;z) - \sqrt{\frac{\lambda_\alpha}{\lambda_\theta}}\hat\alpha_t} \qquad \text{and} \qquad a = \frac{1}{1-\beta}\begin{pmatrix}
	  \nabla\ell(\theta_t;z) \\
	  -\sqrt{\frac{\lambda_\alpha}{\lambda_\theta}}.
	\end{pmatrix}.
  \end{align*} 
  From \cref{lem:truncated-model}, the update is given by 
  $$
  x^* = x_t - \eta \cdot a 
  $$
  where step size is given by 
  \begin{align}
    \label{eq:eta-expression}
    \eta \coloneqq \min\braces{ \lambda_\theta,\, \frac{ \max\braces{c, \,0} }{\normsq{a}} }
  \end{align}
   Plugging in $a,c$ into $\eta$ gives
\begin{align*}
    \theta_{t+1} & = \theta_t - \min\braces{ \lambda_\theta,\, \frac{1}{1-\beta} \frac{ \max\braces{\ell(\theta_t;z) - \sqrt{\frac{\lambda_\alpha}{\lambda_\theta}}\hat\alpha_t, \,0} }{\frac{1}{(1-\beta)^2} (\normsq{ \nabla\ell(\theta_t;z)} +\frac{\lambda_{\alpha}}{\lambda_{\theta}} ) } }  \frac{\nabla\ell(\theta_t;z)}{1-\beta} \\
    \hat\alpha_{t+1} & = \hat\alpha_{t} + \frac{1}{1-\beta} \sqrt{\frac{\lambda_{\alpha}}{\lambda_{\theta}} } \min\braces{ \lambda_\theta,\, \frac{1}{1-\beta} \frac{ \max\braces{\ell(\theta_t;z) - \sqrt{\frac{\lambda_\alpha}{\lambda_\theta}}\hat\alpha_t, \,0} }{\frac{1}{(1-\beta)^2} (\normsq{ \nabla\ell(\theta_t;z)} +\frac{\lambda_{\alpha}}{\lambda_{\theta}} ) } }  .
\end{align*}
  Finally, substituting back using~\labelcref{eq:rescaled-alpha}, that is $\hat \alpha_{t+1} = \sqrt{\frac{\lambda_\theta}{\lambda_\alpha}} \alpha_{t+1}$ and  $\hat \alpha_{t} = \sqrt{\frac{\lambda_\theta}{\lambda_\alpha}} \alpha_{t} -\sqrt{\lambda_{\theta} \lambda_{\alpha}}$ and simplifying gives~\labelcref{eq:theta-update-SPL-2reg} and~\labelcref{eq:alpha-update-SPL-2reg}.
\end{proof}

\begin{lem}
   Each \SPLplus/ update in \Algref{alg:spl-cvar-two-reg} is equivalent to the updates given by~\Eqref{eq:alpha-update-SPL-2reg} and~\Eqref{eq:theta-update-SPL-2reg}.
\end{lem}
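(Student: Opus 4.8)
The plan is a direct case analysis, organized around the scalar multiplier $\eta_t := \min\braces{\tfrac{1}{1-\beta},\,\gamma_t}$ that appears in both \labelcref{eq:theta-update-SPL-2reg} and \labelcref{eq:alpha-update-SPL-2reg}. Since the closed-form updates depend on the sampled $z$ only through $\eta_t$ and through $\nabla\ell(\theta_t;z)$, it suffices to check, for each of the three branches of \Algref{alg:spl-cvar-two-reg} (recalling $\lambda_{\theta,t},\lambda_{\alpha,t}>0$), that the branch condition forces $\eta_t$ to equal the particular value the branch implicitly substitutes, and that plugging that value into \labelcref{eq:theta-update-SPL-2reg}--\labelcref{eq:alpha-update-SPL-2reg} reproduces the branch's two assignments. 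Along the way I would note that the three conditions partition $\R$ as a function of $\alpha_t$ (everything else fixed), which becomes transparent once the second condition is rewritten.

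First branch ($\alpha_t$ too big). If $\alpha_t > \ell(\theta_t;z) + \lambda_{\alpha,t}$ then $\ell(\theta_t;z) - \alpha_t + \lambda_{\alpha,t} < 0$, so the max in the numerator of $\gamma_t$ in \labelcref{eq:gamma-update-SPL-2reg} is $0$, hence $\gamma_t = 0$ and $\eta_t = 0$. Substituting $\eta_t = 0$ into \labelcref{eq:theta-update-SPL-2reg}--\labelcref{eq:alpha-update-SPL-2reg} gives exactly $\theta_{t+1} = \theta_t$ and $\alpha_{t+1} = \alpha_t - \lambda_{\alpha,t}$. Second branch ($\alpha_t$ too small). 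Here I would show that the stated condition $\alpha_t < \ell(\theta_t;z) - \tfrac{\lambda_{\theta,t}}{1-\beta}\normsq{v_t} - \tfrac{\lambda_{\alpha,t}\beta}{1-\beta}$ is equivalent, after adding $\lambda_{\alpha,t}$ to both sides and combining fractions (using the identity $\tfrac{\lambda_{\alpha,t}}{1-\beta} - \lambda_{\alpha,t} = \tfrac{\beta}{1-\beta}\lambda_{\alpha,t}$), to $\ell(\theta_t;z) - \alpha_t + \lambda_{\alpha,t} > \tfrac{1}{1-\beta}\paren{\lambda_{\theta,t}\normsq{v_t}+\lambda_{\alpha,t}} > 0$; that is, the numerator of $\gamma_t$ is positive and $\gamma_t > \tfrac{1}{1-\beta}$, so $\eta_t = \tfrac{1}{1-\beta}$. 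Plugging this into \labelcref{eq:theta-update-SPL-2reg}--\labelcref{eq:alpha-update-SPL-2reg} and applying the same identity for the $\alpha$-update recovers $\theta_{t+1} = \theta_t - \tfrac{\lambda_{\theta,t}}{1-\beta}v_t$ and $\alpha_{t+1} = \alpha_t + \tfrac{\lambda_{\alpha,t}\beta}{1-\beta}$.

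Third branch (middle range). In the remaining case both $\alpha_t \le \ell(\theta_t;z) + \lambda_{\alpha,t}$ and $\alpha_t \ge \ell(\theta_t;z) - \tfrac{\lambda_{\theta,t}}{1-\beta}\normsq{v_t} - \tfrac{\lambda_{\alpha,t}\beta}{1-\beta}$ hold; by the computations above the former says the max in \labelcref{eq:gamma-update-SPL-2reg} equals $\ell(\theta_t;z) - \alpha_t + \lambda_{\alpha,t} \ge 0$, and the latter (given that max is nonnegative) says $\gamma_t \le \tfrac{1}{1-\beta}$. Hence $\gamma_t$ equals the algorithm's $\nu = \tfrac{\ell(\theta_t;z)+\lambda_{\alpha,t}-\alpha_t}{\lambda_{\theta,t}\normsq{v_t}+\lambda_{\alpha,t}}$ and $\eta_t = \gamma_t = \nu$, so \labelcref{eq:theta-update-SPL-2reg}--\labelcref{eq:alpha-update-SPL-2reg} are literally the branch-3 assignments.

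There is no substantive obstacle here: the only point that needs care is the algebraic rewriting of the second branch condition into the threshold inequality $\gamma_t \gtrless \tfrac{1}{1-\beta}$, together with a consistency check at the boundaries, where the algorithm's strict inequalities route ties into branch 3. At such a tie one has $\gamma_t \in \braces{0,\tfrac{1}{1-\beta}}$ and $\eta_t = \gamma_t$, so branch 3 returns exactly the iterate that branch 1 (resp.\ branch 2) would, and the three cases agree on their overlaps. Combining the three branches then yields the claimed equivalence.
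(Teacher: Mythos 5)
Your proof is correct and follows essentially the same route as the paper's: a three-way case analysis matching each branch condition of \Algref{alg:spl-cvar-two-reg} to whether the multiplier $\min\{\tfrac{1}{1-\beta},\gamma_t\}$ is clamped at $0$, clamped at $\tfrac{1}{1-\beta}$, or equal to $\gamma_t=\nu$; the only difference is that the paper phrases the cases in terms of the reparametrized quantities $c$, $a$, $\hat\alpha_t$ from the truncated-model derivation, whereas you work directly with $\gamma_t$ in the original variables, which is equivalent and slightly more self-contained given \cref{lem:closed-form-update}. Your explicit check that ties at the branch boundaries (routed into the third branch by the strict inequalities) produce the same iterate is a small point the paper's proof leaves implicit.
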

\begin{proof}
  We can enumerate all the cases:
  \begin{enumerate}
    \item If $c<0$, which implies checking for 
    \begin{align*}
      \ell(\theta_t;z) < \sqrt{\frac{ \lambda_\alpha }{ \lambda_\theta }} \hat \alpha_t = \sqrt{\frac{ \lambda_\alpha }{ \lambda_\theta }} \paren{ \sqrt{\frac{ \lambda_\theta }{ \lambda_\alpha }} \alpha_t - \sqrt{\lambda_\theta\lambda_\alpha}} = \alpha_t - \lambda_\alpha
    \end{align*} 
    then from \labelcref{eq:eta-expression} $\eta=0$, and the updates are 
    \begin{align*}
      \theta_{t+1} &= \theta^* = \theta_t \\
      \alpha_{t+1} &= \hat\alpha^* = \hat\alpha_t  
    \end{align*}
    Multiplying the second equation by $\sqrt{ \frac{\lambda_\alpha}{ \lambda_\theta }}$ on both sides, we get 
    \begin{align*}
      \sqrt{ \frac{\lambda_\alpha}{ \lambda_\theta }} \sqrt{ \frac{\lambda_\theta}{ \lambda_\alpha }}  \alpha^* &= \sqrt{ \frac{\lambda_\alpha}{ \lambda_\theta }} \paren{ \sqrt{ \frac{\lambda_\theta}{ \lambda_\alpha }}\alpha_t - \sqrt{\lambda_\theta\lambda_\alpha} } \\
      \alpha_{t+1} &= \alpha_t - \lambda_\alpha.
    \end{align*}
    \item If $c > \lambda_\theta\normsq{a} \quad (>0)$, which implies checking for the condition
    \begin{align*}
      \frac{1}{1-\beta} \paren{ \ell(\theta_t;z) - \sqrt{ \frac{\lambda_\alpha}{\lambda_\theta} } \hat\alpha_t } &> \frac{1}{(1-\beta)^2} \paren{ \lambda_\theta \normsq{ \nabla\ell(\theta_t;z) } + \lambda_\alpha } \\
      \ell(\theta_t;z) - \alpha_t \lambda_\alpha &> \frac{1}{1-\beta} \paren{ \lambda_\theta\normsq{ \nabla \ell(\theta_t;z) } + \lambda_\alpha  }.
    \end{align*}
    Then $\eta = \lambda_\theta$, and the updates reduce to 
    \begin{align*}
      \theta_{t+1} &= \theta_t -  \lambda_\theta \frac{1}{1-\beta} \nabla \ell(\theta_t;z) \\
      \alpha_{t+1} &= \hat\alpha^* = \hat\alpha_t - \lambda_\theta \frac{1}{1-\beta} \paren{ -\sqrt{ \frac{\lambda_\alpha}{\lambda_\theta} }  } \\
      &= \alpha_t - \lambda_\alpha + \frac{1}{1-\beta}\lambda_\alpha.
    \end{align*}
    \item Otherwise it must be the case that $0 < \frac{c}{\normsq{a}} < \lambda_\theta$, so $\eta = \frac{c}{\normsq{a}}$, and the updates are given by
    \begin{align*}
      \begin{pmatrix}
        \theta_{t+1} \\ \alpha_{t+1}
      \end{pmatrix} = 
      \begin{pmatrix}
        \theta^* \\ \hat\alpha^* 
      \end{pmatrix} &= \begin{pmatrix}
        \theta_t \\ \hat\alpha_t
      \end{pmatrix} - \frac{c}{\normsq{a}} \cdot a \\
      &= \begin{pmatrix}
        \theta_t \\ \hat\alpha_t
      \end{pmatrix} 
      - \frac{ \ell(\theta_t;z) - \sqrt{ \frac{\lambda_\alpha}{\lambda_\theta} } \hat\alpha_t }{ \normsq{ \nabla\ell(\theta_t;z) } + \frac{\lambda_\alpha}{\lambda_\theta} } 
      \cdot \begin{pmatrix}
      \nabla\ell(\theta_t;z)  \\ -\sqrt{ \frac{\lambda_\alpha }{ \lambda_\theta } }
      \end{pmatrix} \\
      &= \begin{pmatrix}
        \theta_t \\ \hat\alpha_t
      \end{pmatrix} 
      - \underbrace{\frac{ \ell(\theta_t;z) - \alpha_t + \lambda_\alpha }{ \lambda_\theta \normsq{ \nabla\ell(\theta_t;z) } + \lambda_\alpha} 
      }_{ \eqqcolon \nu } \lambda_\theta \cdot \begin{pmatrix}
      \nabla\ell(\theta_t;z)  \\ -\sqrt{ \frac{\lambda_\alpha }{ \lambda_\theta } }
      \end{pmatrix} 
    \end{align*}
    Converting $\hat\alpha_t$ to $\alpha_t$ and $\hat\alpha^*$ to $\alpha^*$, we get that the updates are 
    \begin{align*}
      \theta_{t+1} &= \theta_t - \lambda_\theta \nu \nabla\ell(\theta_t;z) \\
      \alpha_{t+1} &= \alpha_t - \lambda_\alpha + \lambda_\alpha \nu.
    \end{align*}
  \end{enumerate}
Note that the regularization parameters $\lambda_\theta$ and $\lambda_\alpha$ can both be written in a time-dependent form as 
$\lambda_{\theta,t}$ and $\lambda_{\alpha,t}$. This concludes our derivation for the updates of \SPLplus/ given in \Algref{alg:spl-cvar-two-reg}.
As a comparison, we also include the closed-form updates for SGM applied to CVaR 
minimization in \Algref{alg:sgd-cvar}.

\end{proof}

\section{Proof of \cref{thm:general-convex-rate}}
\label{sec:proofs}

\convergenceconvex*

\begin{proof}
For the proof, we 
Recall the model-based update \labelcref{eq:modelbasedtwo} (restated here)
\begin{align*}
	\argmin_{\theta,\alpha}\; \alpha + \frac{1}{1-\beta} & \max\braces{\ell(\theta_t;z) + \inner{ v_t }{\theta-\theta_t} - \alpha, 0} 
	+ \frac{1}{2\lambda_\theta} \normsq{\theta - \theta_t} + \frac{1}{2\lambda_\alpha}(\alpha - \alpha_t)^2  \nonumber
\end{align*}
is the subproblem we solve to obtain the updates with separate regularization. 
Again, we have temporarily dropped the time-dependency on $\lambda_{\alpha,t}$ 
and $\lambda_{\theta,t}$. Since the set of minimizers is the same if we scale the entire expression 
by $\sqrt{\frac{\lambda_\theta}{\lambda_\alpha}}$, we have equivalently
\begin{align}
	\label{eq:rescaled-subprob-for-convergence}
  \argmin_{\theta,\alpha}\; & \sqrt{\frac{\lambda_\theta}{\lambda_\alpha}} \alpha + \frac{1}{1-\beta}\max\braces{ \sqrt{\frac{\lambda_\theta}{\lambda_\alpha} }\paren{ \ell(\theta_t;z) + \inner{v_t}{\theta-\theta_t} } - \sqrt{\frac{\lambda_\theta}{\lambda_\alpha}}\alpha ,\,0} \nonumber \\
  & \phantom{\qquad\qquad\qquad\qquad} + \frac{1}{2\sqrt{\lambda_\theta\lambda_\alpha}}\normsq{\theta-\theta_t} + \frac{1}{2\lambda_\alpha} \sqrt{\frac{\lambda_\theta}{\lambda_\alpha}} \frac{\lambda_\alpha}{\lambda_\theta} \paren{ \sqrt{\frac{\lambda_\theta}{\lambda_\alpha} }\alpha - \sqrt{ \frac{\lambda_\theta}{\lambda_\alpha}} \alpha_t  }^2
\end{align}
Let $\hat\alpha \coloneqq \sqrt{\frac{\lambda_\theta}{\lambda_\alpha}}\alpha $ and $\hat\alpha_t\coloneqq \sqrt{\frac{\lambda_\theta}{\lambda_\alpha} }\alpha_t $
Note that this is a simpler definition of $\hat\alpha_t$ than what we used in the derivation of the updates,
since we no longer have to absorb the leading $\alpha$ into the regularization.
The subproblem \labelcref{eq:rescaled-subprob-for-convergence} can be solved in term of the variables $\theta$ and $\hat\alpha$,
and the scaled linearization
\begin{align}
  \label{eq:scaled-subproblem}
  \argmin_{\theta,\hat\alpha} \hat\alpha + \frac{1}{1-\beta}\max\braces{ \paren{ \hat \ell(\theta_t;z) + \inner{\hat v_t}{\theta-\theta_t} } - \hat\alpha,\, 0 } + \frac{1}{2\sqrt{\lambda_\theta\lambda_\alpha}} \paren{ \normsq{\theta-\theta_t} + (\hat\alpha - \hat\alpha_t)^2  }
\end{align}
where $\hat\ell(\theta_t;z) \coloneqq \sqrt{ \frac{\lambda_\theta }{\lambda_\alpha} } \ell(\theta_t;z)$, and its scaled subgradient is 
$\hat v_t \coloneqq \sqrt{ \frac{\lambda_\theta }{\lambda_\alpha} } v_t$. 
Now define the scaled CVaR objective to be 
\begin{align}
  \label{eq:scaled-variational-cvar}
  \hat F_\beta(\theta,\hat\alpha) = \hat\alpha + \frac{1}{1-\beta}\E_{z\sim P} \brackets{ \max\braces{ \hat \ell(\theta;z) - \hat\alpha,\, 0 } }
\end{align}
and the updates in the scaled subproblem \labelcref{eq:scaled-subproblem} gives the 
\SPLplus/ method for solving this scaled CVaR problem.  By Lemma~\ref{lem:modelassump} we have that the assumptions required to invoke Theorem 4.4 in \citet{davis2019stochastic}
now hold. In particular  since $\ell(\theta;z)$ is $M(z)$--Lipschitz we have that  $\hat \ell(\theta;z)$ is $\sqrt{ \frac{\lambda_\theta }{\lambda_\alpha} } M(z)$--Lipschitz. 
We first consider the convergence of \SPLplus/ in terms of the scaled objectives.
Denoting  $\Delta = \norm{x_0-x^*},$ $\hat\lambda \coloneqq \sqrt{\lambda_\theta\lambda_\alpha}$,
$x= (\theta, \hat\alpha)^\top$, $x^* = (\theta^*, \hat\alpha^*)^\top$ a minimizer of $\hat F_\beta$.

Using a constant step size of $\hat\lambda_t = \frac{\hat\lambda}{\sqrt{T+1}}$, from Theorem 4.4 in \citet{davis2019stochastic}
the convergence rate is 
\begin{align}\label{eq:tempzliuhbol8rz}
  \E\brackets{ \hat F_\beta(\bar x_T) - \hat F_\beta( x^* ) } \leq \frac{ \frac{1}{2}\Delta^2 + \mathsf{L}_{\mathrm{\SPLplus/}}^2 \hat\lambda^2 }{\hat\lambda\sqrt{T+1}}.
\end{align}
Finally, multiplying~\labelcref{eq:scaled-variational-cvar} through by $\sqrt{\frac{\lambda_\alpha }{\lambda_\theta}} $ we have that
\begin{align*}
    \sqrt{\frac{\lambda_\alpha }{\lambda_\theta}}   \hat F_\beta(\theta,\hat\alpha) = \alpha + \frac{1}{1-\beta}\E_{z\sim P} \brackets{ \max\braces{  \ell(\theta;z) - \alpha,\, 0 } } = F_\beta(\theta,\alpha).
\end{align*}
Furthermore, multiplying~\labelcref{eq:tempzliuhbol8rz} through by $\sqrt{\frac{\lambda_\alpha }{\lambda_\theta}} $ and substituting back $\hat\lambda \coloneqq \sqrt{\lambda_\theta\lambda_\alpha}$ and 
$${\Delta}^2 = \norm{\theta_0-\theta^*}^2 + (\hat \alpha_0 -\hat \alpha^*)^2 =
\norm{\theta_0-\theta^*}^2 + {\frac{\lambda_\theta}{\lambda_\alpha}}( \alpha_0 - \alpha^*)^2 $$ gives
\begin{align}\label{eq:tempzliuhbol8rz}
  \E\brackets{  F_\beta(\bar x_T) - F_\beta( x^* ) } &\leq \sqrt{\frac{\lambda_\alpha }{\lambda_\theta}}\frac{ \frac{1}{2}\Delta^2 + \mathsf{L}_{\mathrm{\SPLplus/}}^2 \lambda_\theta\lambda_\alpha }{\sqrt{\lambda_\theta\lambda_\alpha}\sqrt{T+1}} \\
  & =  \frac{ \frac{1}{2}\norm{\theta_0-\theta^*}^2 + \frac{1}{2}\frac{\lambda_\theta}{\lambda_\alpha}( \alpha_0 - \alpha^*)^2 + \mathsf{L}_{\mathrm{\SPLplus/}}^2 \lambda_\theta\lambda_\alpha }{\lambda_\theta\sqrt{T+1}} \\
 &= \frac{1}{2}\frac{\norm{\theta_0-\theta^*}^2}{\lambda_\theta\sqrt{T+1}} + \frac{1}{2}\frac{( \alpha_0 - \alpha^*)^2}{\lambda_\alpha\sqrt{T+1}} + \frac{ \mathsf{L}_{\mathrm{\SPLplus/}}^2  \lambda_\alpha }{\sqrt{T+1}} 
\end{align}
which concludes the proof of convergence of \SPLplus/. 
As for the proof of SGM, it only remains to choose $\lambda_{\theta}=\lambda_{\alpha}=\lambda$
\end{proof}

	To apply Theorem 4.4 in \citet{davis2019stochastic}, 
	we must first verify their assumptions (B1)-(B4) hold. 
	We will enumerate these under their following general setup: 
	writing the CVaR objective in~\Eqref{eq:variational-cvar} as 
	\begin{align}
		F_\beta(x) = f(x) + r(x),
	\end{align} 
 where $r(x)=0$ for SGM while $r(x) = \hat\alpha$ for \SPLplus/. 
	In the \SPLplus/ case, we further write $f(x) = \E_z[ h(c(x;z)) ]$
	where $h(\cdot) = \frac{1}{1-\beta}\max\braces{\cdot,\,0}$
	and $c(x;z) = \hat\ell(\theta;z) - \hat\alpha$. Recall that the stochastic one-sided models used are 
	\begin{align}
		\text{SGM} \qquad &f^{\mathrm{SGM}}_t(x;z) = F_\beta(x_t;z) + \inner{g_t}{x-x_t} \qquad \text{where }g_t \in \partial F_\beta(x_t;z), \quad x = (\theta,\alpha)^\top \label{eq:ftxzdefsSGM} \\
		\text{\SPLplus/} \qquad &f^{\mathrm{SPL}}_t(x;z) = h( c(x_t;z) + \inner{u_t}{x-x_t} ) \qquad \text{where } u_t\in\partial c(x_t;z),  \quad x = (\theta,\hat\alpha)^\top \label{eq:ftxzdefsSPL}
	\end{align}
	and the update in \labelcref{eq:stoch-model-based-update} is equivalent to 
	\begin{align}
		x_{t+1} = \argmin_{x\in\R^{d+1}}\, r(x) + f_t(x;z) + \frac{1}{2\lambda_t} \normsq{x-x_t}
	\end{align}
	
The assumptions we need to verify are given in the following Lemma, which are adapted from \citet{davis2019stochastic}.
\begin{lem} \label{lem:modelassump}
Let $\ell(\theta;z)$ be $M(z)$--Lipschitz and convex. Consider the two alternative definitions for $f_t(x;z)$ given in~\labelcref{eq:ftxzdefsSGM} and ~\labelcref{eq:ftxzdefsSPL}. 
We have that the following assumptions hold.
\begin{enumerate}
	\item[(B1)] (\textbf{Sampling}) 
	\textit{It is possible to generate i.i.d. realizations $z_1,z_2,\hdots\sim P$}.
	\item[(B2)] (\textbf{One-sided accuracy}) 
	\textit{ There is an open set $U$ containing $\dom\,r$ and a measurable function $(x,y;z)\mapsto g_x(y;z)$, defined on $U\times U\times\Omega$, satisfying  
	$$\E_z\brackets{ f_t(x_t;z) } = f(x_t) \quad \forall x_t \in U,$$
	and $$\E_z \brackets{f_t(x;z) - f(x) } \leq \frac{\tau}{2}\normsq{x_t-x} \quad \forall x_t,x\in U. $$}
	
	\item[(B3)] (\textbf{Weak-convexity})
	\textit{The function $f_t(x;z)+r(x)$ is $\eta$-weakly convex for all $x\in U$, a.e. $z\in\Omega$.}

	\item[(B4)] (\textbf{Lipschitz property})    
	\textit{There exists a measurable function $L:\Omega\to\R_+$ satisfying $\sqrt{\E_z[L(z)^2]}\leq \mathsf{L}$ and such that
	$$f_t(x_t;z) - f_t(x;z)\leq L(z)\norm{x_t-x} \quad \forall x_t,x\in U \text{ and a.e. } z\sim P,$$}
 where
     \begin{align}
        \mathsf{L}_{\mathrm{SGM}}^2 &= \E_z\brackets{\frac{M(z)^2+1 }{(1-\beta)^2}+ 1  } & \text{for \textbf{SGM} where $f_t(x;z)$ is~\labelcref{eq:ftxzdefsSGM}}, \\
        \mathsf{L}_{\mathrm{\SPLplus/}}^2 &= \E_z\brackets{\frac{ \frac{\lambda_\theta}{\lambda_\alpha}  M(z)^2+1 }{(1-\beta)^2}} & \text{for \textbf{\SPLplus/} where $f_t(x;z)$ is~\labelcref{eq:ftxzdefsSPL}}.
    \end{align}
\end{enumerate}
\end{lem}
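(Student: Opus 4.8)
The plan is to check (B1)--(B4) one at a time, in parallel for the SGM model $f_t^{\mathrm{SGM}}$ of~\labelcref{eq:ftxzdefsSGM} and the \SPLplus/ model $f_t^{\mathrm{SPL}}$ of~\labelcref{eq:ftxzdefsSPL}. In both cases I would take $U=\R^{d+1}$ (legitimate since $r\equiv 0$ for SGM and $r(x)=\hat\alpha$ for \SPLplus/, so $\dom r=\R^{d+1}$ either way) and let $g_x(y;z)$ be the model built at $x$ evaluated at $y$. (B1) is immediate from the standing assumption that $P$ can be sampled. The structural fact I would lean on for (B2) and (B3) is that both models are built from convex pieces: $f_t^{\mathrm{SGM}}(\cdot;z)$ is the linearization at $x_t$ of the convex map $F_\beta(\cdot;z)$ with $g_t\in\partial F_\beta(x_t;z)$, while $f_t^{\mathrm{SPL}}(\cdot;z)$ is the convex nondecreasing function $h(w)=\tfrac{1}{1-\beta}\max\braces{w,0}$ composed with the affine minorant $x\mapsto c(x_t;z)+\inner{u_t}{x-x_t}$ of the convex map $x\mapsto c(x;z)=\hat\ell(\theta;z)-\hat\alpha$. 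This forces the one-sided-accuracy and weak-convexity constants to vanish, leaving the Lipschitz bound (B4) as the only real computation.

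For (B2): by convexity, $f_t^{\mathrm{SGM}}(x;z)\le F_\beta(x;z)$ pointwise, and since $h$ is nondecreasing, the affine minorant of $c(\cdot;z)$ gives $f_t^{\mathrm{SPL}}(x;z)\le h(c(x;z))$; taking expectations in either case yields $\E_z[f_t(x;z)]\le f(x)$, so (B2) holds with $\tau=0$, while $\E_z[f_t(x_t;z)]=f(x_t)$ holds by construction of the models. For (B3): $f_t^{\mathrm{SGM}}(x;z)+r(x)$ is affine in $x$, and $f_t^{\mathrm{SPL}}(x;z)+r(x)=h\paren{c(x_t;z)+\inner{u_t}{x-x_t}}+\hat\alpha$ is convex (a convex nondecreasing function of an affine map, plus a linear term), so both are convex and (B3) holds with $\eta=0$.

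The substance is (B4). For SGM, $f_t^{\mathrm{SGM}}(\cdot;z)$ is affine with gradient $g_t=(\vzero;1)+\tfrac{u_t}{1-\beta}(v_t;-1)$ from~\labelcref{eq:cvar-sampled-subgrad}, where $u_t\in[0,1]$ and $v_t\in\partial\ell(\theta_t;z)$ so $\norm{v_t}\le M(z)$; hence $\norm{g_t}^2=\tfrac{u_t^2}{(1-\beta)^2}\norm{v_t}^2+\paren{1-\tfrac{u_t}{1-\beta}}^2$, and I would bound the first term by $\tfrac{M(z)^2}{(1-\beta)^2}$ (using $u_t^2\le 1$) and the second, a convex function of $u_t$ on $[0,1]$ with endpoint values $1$ and $\beta^2/(1-\beta)^2$, by $1+\tfrac{1}{(1-\beta)^2}$, giving $\norm{g_t}^2\le\tfrac{M(z)^2+1}{(1-\beta)^2}+1=:L(z)^2$ and hence $\mathsf{L}_{\mathrm{SGM}}^2=\E_z[L(z)^2]$. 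For \SPLplus/, $h$ is $\tfrac{1}{1-\beta}$-Lipschitz and $x\mapsto c(x_t;z)+\inner{u_t}{x-x_t}$ is $\norm{u_t}$-Lipschitz, so $f_t^{\mathrm{SPL}}(\cdot;z)$ is $\tfrac{\norm{u_t}}{1-\beta}$-Lipschitz; since $u_t=(\hat v_t;-1)$ with $\hat v_t\in\partial\hat\ell(\theta_t;z)$ and $\hat\ell(\cdot;z)=\sqrt{\lambda_\theta/\lambda_\alpha}\,\ell(\cdot;z)$ is $\sqrt{\lambda_\theta/\lambda_\alpha}\,M(z)$-Lipschitz, $\norm{u_t}^2=\norm{\hat v_t}^2+1\le\tfrac{\lambda_\theta}{\lambda_\alpha}M(z)^2+1$, which gives $L(z)^2=\tfrac{(\lambda_\theta/\lambda_\alpha)M(z)^2+1}{(1-\beta)^2}$ and the stated $\mathsf{L}_{\mathrm{\SPLplus/}}^2$; square integrability of $L$ in both cases follows from that of $M$. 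The main obstacle is purely bookkeeping in (B4)---tracking the $\alpha$ (resp.\ $\hat\alpha$) coordinate's contribution to the subgradient norm, the source of the extra ``$+1$'' terms, and propagating the $\sqrt{\lambda_\theta/\lambda_\alpha}$ rescaling consistently from $\ell$ to $\hat\ell$---rather than anything conceptually hard.
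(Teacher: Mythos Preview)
Your proposal is correct and follows essentially the same route as the paper: (B1)--(B3) are dispatched by the same convexity observations with $\tau=\eta=0$, and for (B4) you compute the same Lipschitz constants $L(z)$ via the same subgradient bounds. The only cosmetic differences are that for the SGM bound on $\paren{1-\tfrac{u_t}{1-\beta}}^2$ you invoke convexity over $u_t\in[0,1]$ and endpoint evaluation where the paper expands the square and drops the negative cross term, and for \SPLplus/ you phrase the bound as a Lipschitz-composition argument where the paper writes it out via $\max\{a,0\}-\max\{b,0\}\le\max\{a-b,0\}$ and Cauchy--Schwarz; both land on identical constants.
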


\begin{proof}
Assumption (B1) follows trivially from i.i.d. sampling, while (B2) follows from convexity of $\ell(\cdot;z)$ or $\hat\ell(\cdot;z)$, 
giving us $\tau=0$.
Since $r(x)$ is also convex in both methods and both models are convex, 
(B3) holds with $\eta=0$.
 
To prove item (B4) for SGM, where $f_t = f^{\mathrm{SGM}}_t$ is  given in~\labelcref{eq:ftxzdefsSGM},
first note that from~\labelcref{eq:cvar-sampled-subgrad} for $g_t \in \partial F_{\beta}(x_t;z)$ and $u_t \in \partial \ell(\theta_t;z) $ we have that 
\begin{align}
    \norm{g_t}^2 &= \indicator{\ell(\theta_t;z) - \alpha_t \geq 0} \frac{\norm{u_t}^2}{(1-\beta)^2} + \left(1- \frac{\indicator{\ell(\theta_t;z) - \alpha_t \geq 0}}{(1-\beta)} \right)^2 \nonumber \\
    &\leq \indicator{\ell(\theta_t;z) - \alpha_t \geq 0} \frac{M(z)^2}{(1-\beta)^2} + 1- 2\frac{\indicator{\ell(\theta_t;z) - \alpha_t \geq 0}}{(1-\beta)}  +  \frac{\paren{\indicator{\ell(\theta_t;z) - \alpha_t \geq 0}}^2}{(1-\beta)^2} \nonumber \\
    &\leq \frac{M(z)^2}{(1-\beta)^2} +1 + \frac{1}{(1-\beta)^2},
\end{align}
where in the first inequality we used that $\ell(\cdot;z)$ is $M(z)$--Lipschitz to bound $\norm{u_t} \leq M(z)$, and in the second inequality we used that the indicator function $\indicator{\ell(\theta_t;z) - \alpha_t \geq 0}$ is positive and upper bounded by $1$. 
Consequently,
\begin{align}\label{eq:subgradbndSGD}
    \norm{g_t} \leq  \sqrt{1+ \frac{M(z)^2+1}{(1-\beta)^2}  } .
\end{align}
Thus using the above and that $\max\braces{\cdot,\,0}$ is $1$-Lipschitz:
\begin{align}
	f_t(x_t;z) - f_t(y;z) &\leq \norm{g_t} \norm{x_t-y} \nonumber \\
	& \leq   \underbrace{\sqrt{1+ \frac{M(z)^2+1}{(1-\beta)^2}  }}_{=:L(z)}\norm{x_t-y}. \tag{By \labelcref{eq:subgradbndSGD}}
\end{align}
This gives us $\mathsf{L}_{\mathrm{SGM}}^2 = \E_z[L(z)^2]= \E_{z} \brackets{1+ \frac{M(z)^2+1}{(1-\beta)^2}  }$.

For \SPLplus/ and $f_t = f^{\mathrm{SPL}}_t$ defined in~\labelcref{eq:ftxzdefsSPL} we have that
\begin{align*}
	(1-\beta)(f_t(x_t;z) - f_t(y;z)) &=  \max\braces{ \hat\ell(\theta_t;z) - \hat\alpha_t,\,0 } - \max\braces{\hat\ell(\theta_t;z) - \inner{\hat v_t}{\theta-\theta_t}-\hat\alpha  ,\,0}\\
	&\leq \max\braces{\inner{\hat v_t}{\theta-\theta_t} + (\hat\alpha - \hat\alpha_t)  ,\, 0} \tag{$\max\braces{a,0} - \max\braces{b,0} \leq \max\braces{a-b,0}$ }\\
	&= \max\braces{\left\langle \begin{pmatrix}
		\hat v_t \\
		1
	\end{pmatrix},\, \begin{pmatrix}
		\theta - \theta_t \\
		\hat\alpha - \hat\alpha_t
	\end{pmatrix} \right\rangle ,\,0} \\
	&\leq  \norm{ \begin{pmatrix}
		\hat v_t \\
		1
	\end{pmatrix} } \norm{ \begin{pmatrix}
		\theta - \theta_t \\
		\hat\alpha - \hat\alpha_t
	\end{pmatrix} }  \tag{Cauchy-Schwarz} \\
	& \leq \sqrt{ 1 + \normsq{\hat v_t} } \norm{x_t-y} \\
	&\leq \sqrt{1 + \frac{\lambda_\theta }{\lambda_\alpha}  M(z)^2} \norm{x_t-y} \tag{Since $\hat v_t$ is scaled $v_t$}
\end{align*}
Dividing both sides by $(1-\beta)$ gives us 
\begin{align*}
	f_t(x_t;z) - f_t(y;z) &\leq \underbrace{\paren{\frac{\sqrt{ \frac{\lambda_\theta }{\lambda_\alpha} M(z)^2 + 1 }}{1-\beta}   } }_{=: L(z)} \norm{x_t-y}.
\end{align*}
Taking expectation over $z$ yields
\begin{align*}
\mathsf{L}_{\mathrm{\SPLplus/}}^2 = \E_z[L(z)^2] = \E_z \brackets{ \frac{\frac{\lambda_\theta }{\lambda_\alpha} M(z)^2 + 1 }{(1-\beta)^2}   }.
\end{align*}
\end{proof}



\section{Additional experiment results}
\label{sec:app-exp-details}
\Figref{fig:iters-vs-lambda} shows a similar sensitivity analysis to \Figref{fig:subopt-vs-lambda}
in the main text. Instead of the sensitivity of final suboptimality, here we show the 
sensitivity of the minimum number of iterations to reach $\epsilon$-suboptimality $\tilde F(\theta,\alpha) - \tilde F^*\leq \epsilon$.
\begin{figure}[h]
	\begin{center}
		\includegraphics[scale=0.8]{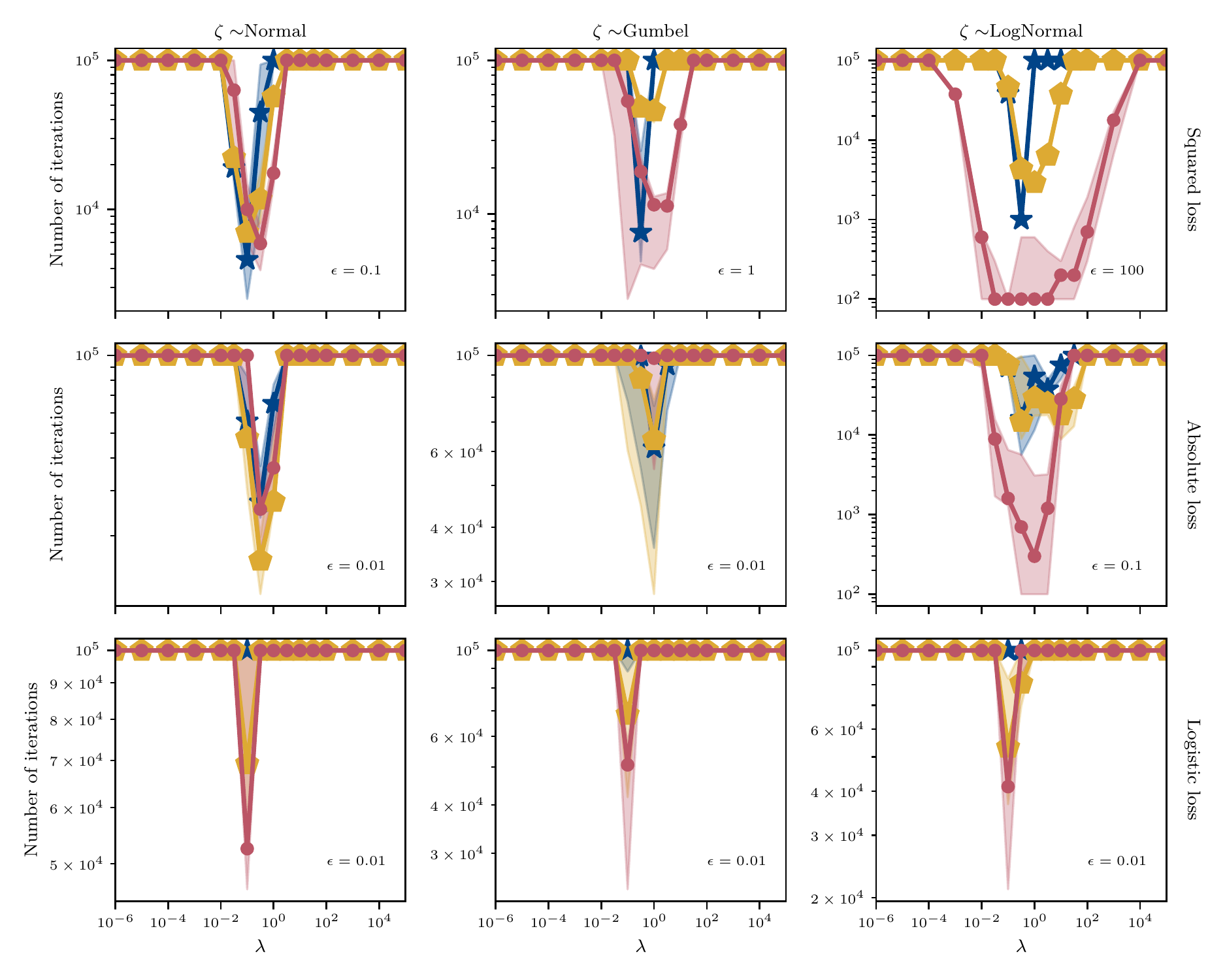}
		\includegraphics[scale=1.2]{figs/legend.pdf}
	\end{center}
	\caption{Sensitivity of minimum number of iterations to achieve $\epsilon$ suboptimality to step size choices. The first two rows are regression tasks under the $\ell_1$ and $\ell_2$ losses, 
	while the third row correspond to a binary classification task under the logistic loss. 
	The columns correspond to different noise distributions in the data generation that controls the difficulty of the problem. }
	\label{fig:iters-vs-lambda}
\end{figure}


\section{Relationship to max loss minimization}
\label{sec:relationship-to-max-loss-minimization}
\begin{lem}
  \label{lemma:max-loss-splplus-equivalence}
  The \SPLplus/ updates in \cref{lem:closed-form-update} minimizes the 
  prox-linear model of the Lagrangian of the max loss objective,
  \begin{align*}
    \min_{\theta\in\R^d} \; f(\theta) = \max_{i=1,\dots,n} \ell(\theta;z_i)
  \end{align*}
  with $\beta = 1-\nicefrac{1}{n}$.
\end{lem}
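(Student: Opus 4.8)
The plan is to exhibit the \SPLplus/ subproblem as a proximal step on the prox-linear model of a sampled Lagrangian, so that \cref{lem:closed-form-update} immediately identifies its solution with the \SPLplus/ iteration. First I would write the epigraph (slack) reformulation of~\labelcref{eq:max-loss-minimization-problem},
\begin{align*}
  \min_{\theta\in\R^d,\,\alpha\in\R}\ \alpha \qquad \text{s.t. }\ \ell(\theta;z_i)\le\alpha,\quad i=1,\dots,n,
\end{align*}
whose Lagrangian is $\mathcal L(\theta,\alpha;\mu)=\alpha+\sum_{i=1}^n\mu_i\paren{\ell(\theta;z_i)-\alpha}$, $\mu\ge 0$. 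Setting $\beta=1-\nicefrac{1}{n}$, so that $\nicefrac{1}{1-\beta}=n$, I would observe that the CVaR variational objective~\labelcref{eq:variational-cvar} collapses to $F_\beta(\theta,\alpha)=\alpha+\sum_{i=1}^n\max\braces{\ell(\theta;z_i)-\alpha,\,0}=\sup_{\mu\in[0,1]^n}\mathcal L(\theta,\alpha;\mu)$; equivalently, drawing $z$ uniformly from the $n$ points, the sampled objective is $F_\beta(\theta,\alpha;z)=\alpha+n\max\braces{\ell(\theta;z)-\alpha,\,0}=\sup_{\mu\in[0,1]}\brackets{\alpha+n\mu\paren{\ell(\theta;z)-\alpha}}$, i.e. a partial Lagrangian of the slack problem in which the (rescaled) multiplier on the sampled constraint is the extreme value $n=\nicefrac{1}{1-\beta}$ it takes whenever that constraint is the violated one.

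Next I would match the prox-linear model of this Lagrangian with the \SPLplus/ model~\labelcref{eq:spl-model}. Writing $F_\beta(\cdot;z)=\alpha+h\paren{c(\theta,\alpha;z)}$ with $h(t)=n\max\braces{t,\,0}$ --- the envelope $\sup_{\mu\in[0,1]}n\mu t$ of the Lagrangian in the dual variable --- and $c(\theta,\alpha;z)=\ell(\theta;z)-\alpha$, the prox-linear model replaces $c$ by its linearization at $(\theta_t,\alpha_t)$, namely $\ell_t+\inner{v_t}{\theta-\theta_t}-\alpha$ for $v_t\in\partial\ell(\theta_t;z)$ (here $c$ is already affine in $\alpha$), while keeping $h$ and the leading $\alpha$ intact; this is exactly $m_t^{\mathrm{SPL}}(\theta,\alpha;z)$. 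Consequently the model-based step~\labelcref{eq:stoch-model-based-update-2} --- a prox step on this prox-linear Lagrangian model with the two separate quadratic regularizers --- is solved in closed form by \cref{lem:closed-form-update}, whose iterates are precisely the \SPLplus/ updates~\labelcref{eq:theta-update-SPL-2reg}--\labelcref{eq:gamma-update-SPL-2reg}. Reading those off with $\nicefrac{1}{1-\beta}=n$, the effective multiplier applied to $\nabla\ell(\theta_t;z)$ is $\min\braces{n,\gamma_t}\in[0,n]$, equal to $n=\nicefrac{1}{1-\beta}$ precisely on the branch of \Algref{alg:spl-cvar-two-reg} where the linearized constraint is active, which matches the Lagrange-multiplier value in the statement.

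The step I expect to be the main obstacle is pinning down what ``prox-linear model of the Lagrangian'' must mean so that it produces \SPLplus/ rather than SGM: one has to \emph{retain} the supremum over the dual variable $\mu\in[0,1]$ --- which is what reconstitutes $\max\braces{\cdot,\,0}$ and is the entire point of the prox-linear model --- instead of fixing $\mu$ before linearizing, and then argue that linearizing $\ell$ and taking $\sup_\mu$ commute because $\mu$ enters bilinearly, so the two orders agree. A secondary, purely bookkeeping point is the rescaling $\hat\alpha=\sqrt{\lambda_\theta/\lambda_\alpha}\,\alpha$ inside the proof of \cref{lem:closed-form-update}: that lemma already returns the minimizer of the prox-linear model plus the two regularizers, so nothing further is required, but it is worth stating explicitly that the slack/quantile variable $\alpha$ is never linearized. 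Everything else --- the identity $F_\beta=\sup_\mu\mathcal L$, the linearization of $c$, and the branch-by-branch reading of \Algref{alg:spl-cvar-two-reg} --- is routine.
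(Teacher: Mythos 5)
Your proposal is correct and reaches the same endpoint as the paper, but it gets there by a genuinely different route. The paper first rewrites the epigraph constraints in the pre-truncated form $\max\braces{\ell(\theta;z_i)-s,\,0}\leq 0$, forms the Lagrangian $s+\tfrac{1}{n}\sum_i\Gamma_i\max\braces{\ell(\theta;z_i)-s,\,0}$ with \emph{fixed} multipliers $\Gamma_i$, observes that the stochastic prox-linear model of this Lagrangian with the two regularizers is exactly the \SPLplus/ subproblem when $\Gamma_i=\nicefrac{1}{1-\beta}$, and then runs a KKT analysis of the slack formulation to argue that the dual-optimal multipliers satisfy $\Gamma_i^*\geq n$ on the active set, so that $\Gamma_i^*=n$ (hence $\beta=1-\nicefrac{1}{n}$) is the natural choice. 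You instead keep the plain affine constraints, write the truncation as the envelope $\sup_{\mu\in[0,1]}n\mu\,t=n\max\braces{t,\,0}$ of a partial Lagrangian in a \emph{bounded} dual variable, and identify the prox-linear model as ``linearize the inner map $c$, keep the outer sup over $\mu$''; the multiplier value $n=\nicefrac{1}{1-\beta}$ then falls out of the envelope rather than from KKT. Your route buys a cleaner explanation of \emph{why} the prox-linear model is the right notion of ``model of the Lagrangian'' (the sup over $\mu$ is what reconstitutes $\max\braces{\cdot,\,0}$, and it commutes with linearizing $\ell$ because $\mu$ enters bilinearly), which the paper leaves implicit; the paper's KKT route buys a justification that $\nicefrac{1}{1-\beta}$ is not just \emph{a} consistent multiplier but the dual-optimal one. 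One caveat worth stating if you write this up: the restriction $\mu\in[0,1]^n$ is not the unconstrained Lagrangian dual of the epigraph problem (that sup would be $+\infty$ off the feasible set), so you should present it as the exact bounded-multiplier representation of $F_\beta$ rather than as ``the'' Lagrangian dual; with that caveat the argument is sound.
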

\begin{proof}
  The equivalent slack formulation to the max loss objective is
\begin{align*}
\min_{s,\theta}&\; s \\
\text{s.t. }\; &\ell(\theta;z_i) \leq s \quad \forall i=1,\dots,n
\end{align*}  
Note that we can add a dummy constraint to have the equivalent problem
\begin{align*}
\min_{s,w}&\; s \\
\text{s.t. }\; &\ell(\theta;z_i) \leq s \quad \forall i=1,\dots,n \\
& 0 \leq 0 \\
& \Updownarrow \\
\min_{s,w}&\; s \\
\text{s.t. }\;& \max\braces{\ell(\theta;z_i)-s,\,0} \leq 0
\end{align*}  
Then the Lagrangian is given by 
\begin{align}
  \label{eq:lagrangian-max-loss}
  \mathcal{L}(s,\theta,\Gamma) = s + \frac{1}{n}\sum_{i=1}^n \Gamma_i \max\braces{\ell(\theta;z_i)-s,\,0}
\end{align}
Note that we have included a $\nicefrac{1}{n}$ scaling for each constraint, which is fine because they are positive and so can be absorbed into the Lagrange multipliers. The dual problem is given by 
\begin{align*}
\max_{\Gamma\in\R^n} &\; \braces{g(\Gamma) = \min_{s,\theta} \mathcal{L}(s,\theta,\Gamma) }\\
\text{s.t. }\; &\Gamma_i \geq 0 \quad \forall i = 1,\dots,n \
\end{align*}
And so given a set of $\Gamma$, we need to  minimize the Lagrangian over $s$ and $\theta$. We can treat this as the \emph{base objective}, 
the basis of our stochastic model construction. At each iteration $t$, we will use the following model
\begin{align*}
m_t(s,\theta) = s + \Gamma_i \max\braces{ \ell(\theta_t;z_i) + \inner{\nabla \ell(\theta_t;z_i)}{\theta-\theta_t} -s,\, 0}
\end{align*}
And then using the stochastic model-based approach, the updates are given by
\begin{align*}
\theta_{t+1}, s_{t+1} = \argmin_{s,\theta} \; m_t(s,\theta) + \frac{1}{2\lambda_\theta}\normsq{\theta-\theta_t} + \frac{1}{2\lambda_s}(s-s_t)^2
\end{align*}
Observe that this corresponds exactly to our \SPLplus/ updates for the CVaR objective. 
Specifically, we can recover that by taking $s=\alpha$ and $\Gamma_i = \frac{1}{1-\beta}$. 
Now let's analyze the KKT conditions to see what $\Gamma$ should be. 
Let $u_i^* = \left.\partial \max\{u, 0\}\right|_{u = \ell(\theta^*; z_i) - s^*}$
\begin{align*}
0 \in \partial_s \mathcal{L}(s^*, \theta^*, \Gamma^*) &=  1-\frac{1}{n}\sum_{i=1}^n \Gamma_i^* u_i^* \iff 1\in \frac{1}{n}\sum_{i=1}^n \Gamma^*_iu^*_i \\ 
0 \in \partial_\theta \mathcal{L}(s^*,\theta^*,\Gamma^*) &= \frac{1}{n}\sum_{i=1}^n \Gamma_i^* u_i^*\nabla \ell(\theta^*;z_i) \\
\max\braces{\ell(\theta^*;z_i)-s^*,\,0} &\leq 0  \qquad \forall i \\
\Gamma_i^* &\geq 0 \qquad \forall i \\
\Gamma_i^*(\max\braces{\ell(\theta^*;z_i)-s^*,\,0} ) &= 0  \qquad \forall i
\end{align*}
First, based on the constraints, we must have $\ell(\theta^*;z_i)\leq s^*$ for all $i$. 
Now suppose $s^*$ does not achieve the max loss, i.e. $\ell(\theta^*;z_i)<s^*$ for all $i$. Then $u_i^*=0$ for all $i$ and the first KKT condition would collapse. 
This means that the set of indices attaining the max $\mathcal{I}=\braces{i=1,\dots,n:\, \ell(\theta^*;z_i)=s^*}$ must be non-empty. We can use this to simplify the first two conditions to 
\begin{align*}
1 &\in \frac{1}{n}\sum_{i\in\mathcal{I}}\Gamma_i^*[0,1] \\
0 &\in \frac{1}{n} \sum_{i\in\mathcal{I}}\Gamma_i^*[0,1]\nabla \ell(\theta^*;z_i)
\end{align*}
which holds for $\Gamma_i^*\geq n$ for $i\in\mathcal{I}$. If we let $\Gamma_i^* = n$ for all $i$, 
then we will need $\beta=1-\nicefrac{1}{n}$ to recover the CVaR objective, which makes sense 
in the max loss minimization problem with $n$ training examples,
so we can take $P=P_n$ the empirical distribution.
\end{proof}


\end{document}